\theoremstyle{plain}
\newtheorem{theorem}                 {Theorem}      [section]
\newtheorem{proposition}  [theorem]  {Proposition}
\newtheorem{corollary}    [theorem]  {Corollary}
\newtheorem*{theorem*}{Theorem}
\theoremstyle{definition}
\newtheorem{example}      [theorem]  {Example}
\newtheorem{remark}       [theorem]  {Remark}
\newtheorem{definition}   [theorem]  {Definition}
\def \r{\mbox{${\mathbb R}$}}
\def \c{\mbox{${\mathbb C}$}}
\def \h{\mbox{${\mathbb H}$}}
\def \s{\mbox{${\mathbb S}$}}
\def \g{\mbox{$g_{\varepsilon}$}}
\def \n{\mbox{${{\nabla}}^\varepsilon$}}
\DeclareMathOperator{\R}{R^{\varepsilon}}
\DeclareMathOperator{\trace}{trace}
\DeclareMathOperator{\cst}{constant}
\begin{document}

\title{Constant angle surfaces in Lorentzian Berger spheres}

\author{Irene I. Onnis}
\address{Departamento de Matem\'{a}tica\\ ICMC/USP-Campus de S\~ao Carlos\\
Caixa Postal 668\\ 13560-970 S\~ao Carlos, SP, Brazil}
\email{onnis@icmc.usp.br}

\author{Apoena Passos Passamani}
\address{Departamento de Matem\'{a}tica, UFES, 29075-910 Vit\'{o}ria, ES, Brazil}
	\email{apoenapp@gmail.com.br}
	
	\author{Paola Piu}\address{Universit\`a degli Studi di Cagliari\\
Dipartimento di Matematica e Informatica\\
Via Ospedale 72\\
09124 Cagliari}
\email{piu@unica.it}

\subjclass{53B25, 53C50}
\keywords{Helix surfaces, constant angle surfaces, Lorentzian Berger sphere}
\thanks{The third author was supported by PRIN 2015 ``Variet\`a reali e complesse: geometria, topologia e analisi armonica''  Italy; and GNSAGA-INdAM, Italy.}

\begin{abstract}
In this work, we study helix spacelike and timelike surfaces in the Lorentzian Berger sphere $\s_{\varepsilon}^3$, that is the three-dimensional sphere endowed with a $1$-parameter family of Lorentzian metrics, obtained by deforming the round metric on $\s^3$ along the fibers of the Hopf fibration $\s^3\to \s^2({1}/{2})$ by $-\varepsilon^2$. Our main result provides a characterization of the helix surfaces in $\s_{\varepsilon}^3$ using the symmetries of the ambient space and a general helix in $\s_{\varepsilon}^3$, with axis the infinitesimal generator of the Hopf fibers. Also, we construct some explicit examples of helix surfaces in  $\s_{\varepsilon}^3$.
\end{abstract}

\maketitle

\section{Introduction}

By definition, a {\em helix surface} or {\em constant angle surface}  is a surface whose unit normal vector field forms a constant angle with a fixed field of directions of the ambient space. The study of these surfaces starts with \cite{CDS07}, where are analyzed such surfaces in $\r^3$ obtaining a remarkable relation with a Hamilton-Jacobi equation and showing their application to  equilibrium configurations of liquid crystals. \\

In recent years much work has been done to understand the geometry of the helix surfaces and they have been classsified in all the $3$-dimensional Riemannian geometries (see \cite{DFVV07,DM09,Di,FMV11,LM11,MO,MOP}). Moreover, we remark that helix submanifolds have been studied in higher dimensional euclidean spaces and product spaces (see \cite{DSRH09,DSRH10,RH11}). \\

Concerning the study of helix surfaces in Lorentzian $3$-manifolds, we refer \cite{FN,LM} and \cite{OP}. In \cite{LM}, the authors classified  constant angle spacelike surfaces in the Lorentz-Minkowski $3$-space, while in \cite{FN} are considered  constant angle spacelike and timelike  surfaces in the Lorentzian product spaces given by $\s^2\times \r_1$ and $\h^2 \times \r_1$.   Moreover, in \cite {OP} is given an explicit local parametrization of constant angle  (spacelike and timelike)  surfaces in the three-dimensional Heisenberg group, equipped with a $1$-parameter family of Lorentzian metrics. \\ 

In this paper, we characterize the surfaces in the  Lorentzian Berger sphere $\s^3_\varepsilon$ whose unit normal vector field makes a constant angle with the unit Hopf vector field. We remember that Hopf vector fields on the $3$-sphere are tangent to the fibers of the Hopf fibration $\psi:\s^3\to\s^2({1}/{2})$. When both
manifolds are endowed with their usual metrics, this map is a Riemannian submersion with
totally geodesic fibers, whose tangent space is generated by the vector field $X_1(z,w) = J_1(z,w)$,
where $(z,w)\in\s^3$ and $J_1$ is the usual complex structure of $\r^4$.
\\

The Lorentzian Berger sphere  $\s^3_\varepsilon$ is the usual $3$-sphere equipped with a $1$-parameter family of Lorentzian metrics $g_\varepsilon$, $\varepsilon\neq 0$, that are obtained by deforming  the canonical metric $\langle\;,\;\rangle$ on the sphere $\s^3$ along the fibers of the Hopf fibration $\psi$ in the following way:
$$g_\varepsilon|_{X_1^\perp}=\langle\;,\;\rangle|_{X_1^\perp},\qquad g_\varepsilon(X_1,X_1)=-\varepsilon^2\,\langle X_1,X_1\rangle, \qquad g_\varepsilon(X_1,X_1^\perp)=0.$$
With respect to the metric $g_\varepsilon$, the Hopf vector field $E_1=\varepsilon^{-1}\,X_1$ is a unit Killing vector field and it satisfies the geometric identity:
\begin{equation}\label{eqprima}
\n_ X E_1=-\varepsilon \, X\wedge E_1, \qquad X\in \mathfrak{X}\big(\s^3_{\varepsilon}\big),
\end{equation}
where $\wedge$ is the cross product in $\s^3_{\varepsilon}$ and $\n$ the Levi-Civita connection of  $\s^3_\varepsilon$. We point out that, starting from the equation~\eqref{eqprima} we derive two additional equations (see \eqref{nablaT} and \eqref{X(nu)}) that will be used to determine the shape operator and the Levi-Civita connection of a constant angle surface in $\s^3_\varepsilon$ (see Proposition~\ref{princ}).\\

Our first result towards the classification of the constant angle surfaces in $\s^3_\varepsilon$ is the Proposition~\ref{position-vector}, showing that we can choose local coordinates $(u,v)$ on a helix surface, so that its position vector $F(u,v)$ in the Euclidean space $\r^4$ must satisfy the differential equation:
$$\frac{\partial^4F}{\partial u^4}+(\tilde{b}^2-2\tilde{a})\,\frac{\partial^2F}{\partial u^2}+\tilde{a}^2\,F=0\,,$$
where
$\tilde{a}$ and $\tilde{b}$ are real constants depending on $\varepsilon$ and $\nu:=g_\varepsilon(N,E_1)\,g_\varepsilon(N,N)$.  Here, $N$ is the unit normal to the helix surface. Moreover, in Proposition~\ref{pro-viceversa} are given necessary and sufficient conditions that an immersion must fulfill in order to define a helix surface in $\s^3_\varepsilon$.\\

Combining these two propositions, we prove the main result of this paper, the Theorem~\ref{teo-principal}, that provides an explicit local description of the semi-Riemannian helix surfaces with constant angle function $\nu$ in $\s^3_\varepsilon$, by means of 
 a suitable $1$-parameter family of isometries of the ambient space and a geodesic of a $2$-torus in the $3$-dimensional sphere. 
Moreover, we investigate the properties of this curve, showing that it is a general helix in $\s^3_\varepsilon$ with axis $E_1$ and, also, that the hyperbolic angle of the general helix is equal to the hyperbolic angle between the unit normal to the helix surface and $E_1$.

%
%
%
 
\section{Preliminaries}
The $3$-dimensional {\it Lorentzian Berger sphere} is defined, using the Hopf fibration, as follows.  
Let
$\s^2({1}/{2})=\{(z,t)\in\c\times\r\colon |z|^2+t^2={1}/{4}\}$ be the usual  $2$-sphere and let $\s^3=\{(z,w)\in\c^2\colon |z|^2+|w|^2=1\}$ be the usual $3$-sphere. Then the Hopf map $$\psi:\s^3\to\s^2({1}/{2}),$$
given by
$$
\psi(z,w)=\frac{1}{2}\,(2z\bar{w},|z|^2-|w|^2)\,,
$$
is a Riemannian submersion and the vector fields
$$
X_1(z,w)=(iz,iw),\quad X_2(z,w)=(-i\bar{w},i\bar{z}),\quad X_3(z,w)=(-\bar{w},\bar{z})
$$
parallelize $\s^3$, with $X_1$  vertical and $X_2$, $X_3$ horizontal. The Lorentzian Berger sphere $\s^3_\varepsilon$, $\varepsilon>0$, is the sphere $\s^3$ endowed with the $1$-parameter family of Lorentzian metrics given by:
$$
g_\varepsilon(X,Y)=\langle X,Y\rangle-(\varepsilon^2+1)\langle X,X_1\rangle \,\langle Y,X_1\rangle,
$$
where $\langle,\rangle$ represents the canonical metric of $\s^3$. \\

Considering the orthonormal basis of $\s^3_{\varepsilon}$ defined by
\begin{equation}\label{eq-basis}
    E_1=\varepsilon^{-1}\,X_1,\quad
     E_2=X_2,\quad
     E_3=X_3,
\end{equation}
 the Levi-Civita connection $\n$ of $\s^3_{\varepsilon}$ is given by:
\begin{equation}
\begin{array}{lll}
\n_{E_{1}}E_{1}=0,& \n_{E_{2}}E_{2}=0,& \n_{E_{3}}E_{3}=0,\\
\n_{E_{1}}E_{2}=\varepsilon^{-1}(2+\varepsilon^2)E_{3}, & &\n_{E_{1}}E_{3}=-\varepsilon^{-1}(2+\varepsilon^2)E_{2},\\
 \n_{E_{2}}E_{1}=\varepsilon E_{3},& \n_{E_{3}}E_{1}=-\varepsilon E_{2},&
 \n_{E_{3}}E_{2}=-\varepsilon E_{1}=-\n_{E_{2}}E_{3}.
\end{array}
\label{nabla}
\end{equation}
The (timelike) unit Killing vector field $E_1$, called the {\em Hopf vector field}, is  tangent to the fibers of the submersion $\psi$ and it satisfies the following identity:
\begin{equation}\label{nablaE1}
\n_ X E_1=-\varepsilon \, X\wedge E_1, \qquad X\in \mathfrak{X}\big(\s^3_{\varepsilon}\big),
\end{equation}
where $\wedge$ is the cross product in $\s^3_{\varepsilon}$, that is defined by the formula: 
$$U\wedge V=-(u_2\,v_3-u_3\, v_2)\,E_1+(u_3\,v_1-u_1\,v_3)\,E_2+(u_1v_2-u_2v_1)\,E_3.$$

For the Riemann curvature tensor, we adopt the convention 
$$\R(X,Y)Z=\n_X\n_Y Z-\n_Y\n_X Z-\n_{[X,Y]}Z,$$
which confers the following non null components:
\begin{equation}\label{R1}
\begin{aligned}
\R(E_1,E_2)E_1&=-\varepsilon^2\, E_2\, , \qquad &\R(E_1,E_3)E_1&=-\varepsilon^2\, E_3\, ,\\ 
\R(E_1,E_2)E_2&=-\varepsilon^2\, E_1\, , \qquad & \R(E_1,E_3)E_3&=-\varepsilon^2\, E_1\, ,  \\
  \R(E_2,E_3)E_3&= (4+3\varepsilon^2)E_2\, ,  \qquad &  \R(E_2,E_3)E_2&= -(4+3\varepsilon^2)E_3\, .
\end{aligned}
\end{equation}
Consequently, we have the following result.
\begin{proposition}\label{curv}
	The Riemann curvature tensor $\R$ of $\s^3_{\varepsilon}$ is determined by
	\begin{equation}\label{R2}
	\begin{aligned}
	\R(X,Y)Z=&(4+3\varepsilon^2)\big[g_{\varepsilon}(Y,Z)X-g_{\varepsilon}(X,Z)Y\big]\\
	&+4(1+\varepsilon^2)\big[g_{\varepsilon}(Y,E_1)g_{\varepsilon}(Z,E_1)X-g_{\varepsilon}(X,E_1)g_{\varepsilon}(Z,E_1)Y\\
	&+g_{\varepsilon}(X,E_1)g_{\varepsilon}(Y,Z)\,E_1-g_{\varepsilon}(Y,E_1)g_{\varepsilon}(X,Z)\,E_1\big],
	\end{aligned}
	\end{equation} 
	for all vector fields $X, Y, Z$ on $\s^3_{\varepsilon}$.
\end{proposition}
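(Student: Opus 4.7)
The plan is to verify the formula~\eqref{R2} by checking it on the orthonormal basis $\{E_1,E_2,E_3\}$ and then invoking multilinearity. Since both $\R$ and the tensor field $T$ defined by the right-hand side of~\eqref{R2} are $C^\infty(\s^3_\varepsilon)$-trilinear in $(X,Y,Z)$, it suffices to show that $\R(E_i,E_j)E_k = T(E_i,E_j)E_k$ for all indices $i,j,k\in\{1,2,3\}$. The antisymmetry $\R(X,Y)Z=-\R(Y,X)Z$ (shared by both sides) further reduces the number of cases to be checked, and the vanishing $\R(E_i,E_i)Z=0$ eliminates more.

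First, I would record the signs coming from the signature of $g_{\varepsilon}$: namely $g_\varepsilon(E_1,E_1)=-1$ and $g_\varepsilon(E_2,E_2)=g_\varepsilon(E_3,E_3)=1$, all other pairings being zero. Substituting the basis vectors into the right-hand side of~\eqref{R2}, the six independent cases reduce to computations such as
\begin{equation*}
T(E_1,E_2)E_1=(4+3\varepsilon^2)\bigl[0-(-1)E_2\bigr]+4(1+\varepsilon^2)\bigl[0-(-1)(-1)E_2+0-0\bigr]=-\varepsilon^2 E_2,
\end{equation*}
and similarly for the other five independent cases. A parallel check for $T(E_2,E_3)E_3$ gives $(4+3\varepsilon^2)E_2$ (the $4(1+\varepsilon^2)$-correction vanishes because $g_\varepsilon(E_2,E_1)=g_\varepsilon(E_3,E_1)=0$), while $T(E_i,E_j)E_k$ with $\{i,j,k\}$ containing two distinct horizontal indices but with $k$ a vertical index works out in the same way. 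Comparing each result with the corresponding entry of~\eqref{R1} confirms the identity on basis vectors.

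Having established agreement on the basis, I would conclude by $C^\infty$-trilinearity that $\R(X,Y)Z=T(X,Y)Z$ for arbitrary $X,Y,Z\in\mathfrak{X}(\s^3_\varepsilon)$, which is the statement of Proposition~\ref{curv}. The only real ``obstacle'' here is bookkeeping: one must be careful with the minus sign produced by $g_\varepsilon(E_1,E_1)=-1$, because the $4(1+\varepsilon^2)$-term in~\eqref{R2} involves pairings with $E_1$ appearing quadratically and it is easy to misplace a sign; once these signs are tracked correctly, the matching with~\eqref{R1} is immediate, and in particular the cancellation $(4+3\varepsilon^2)-4(1+\varepsilon^2)=-\varepsilon^2$ reproduces the ``mixed'' curvatures while pure horizontal planes retain sectional curvature $4+3\varepsilon^2$.
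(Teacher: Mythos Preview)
Your proof is correct, and it takes a different route from the paper's argument. You verify the identity directly on the orthonormal frame $\{E_1,E_2,E_3\}$ and then invoke $C^\infty$-trilinearity; the paper instead writes $X=\overline{X}+xE_1$, $Y=\overline{Y}+yE_1$, $Z=\overline{Z}+zE_1$ with $\overline{X},\overline{Y},\overline{Z}\perp E_1$, observes via the symmetries of $\R$ and \eqref{R1} that any term of $\g(\R(X,Y)Z,W)$ in which $E_1$ appears an odd number of times (or four times) must vanish, and then evaluates only the surviving ``horizontal'' and ``two-$E_1$'' pieces. Your approach is more elementary and purely computational; the paper's decomposition is a bit more structural and makes the role of the Hopf direction $E_1$ in the formula visible, explaining \emph{why} the correction term $4(1+\varepsilon^2)[\cdots]$ is quadratic in $g_\varepsilon(\cdot,E_1)$. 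One small remark on bookkeeping: with $i<j$ there are nine cases $(i,j,k)$, not six---the three with pairwise distinct $i,j,k$ (e.g.\ $\R(E_1,E_2)E_3$) are zero on both sides and should, strictly speaking, also be checked; they follow immediately since every inner product occurring is off-diagonal.
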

\begin{proof}
	Firstly, we decompose the vectors $X,Y,Z$ as
$$X=\overline{X}+x\,E_1, \qquad Y=\overline{Y}+y\,E_1, \qquad Z=\overline{Z}+z\,E_1,$$
where $\overline{X},\overline{Y},\overline{Z}$ are orthogonal to $E_1$ and $x=-g_{\varepsilon}(X,E_1)$, etc. Now, using \eqref{R1} and the properties of the Riemann curvature tensor, we conclude that the terms  
$$\g\big(\R(X,Y)Z,W\big)$$ 
where $E_1$ appears one, three or four times are null. So, for every vector field $W$ in $\s^3_{\varepsilon}$, we have 
$$\begin{aligned}\g\big(\R(X,Y)Z,W\big)=&g\big(\R(\overline{X},\overline{Y})\overline{Z},\overline{W}\big)\\
&+y\,z\,\g\big(\R(\overline{X},E_1)E_1,\overline{W}\big)+x\,z\,\g\big(\R(E_1,\overline{Y})E_1,\overline{W}\big)\\
&+w\,x\,\g\big(\R(E_1,\overline{Y})\overline{Z},E_1\big)+y\,w\,\g\big(\R(\overline{X},E_1)\overline{Z},E_1\big).
\end{aligned}$$
Using \eqref{R1}, it is easy to see that 
$$g\big(\R(\overline{X},\overline{Y})\overline{Z},\overline{W}\big)=(4+3\varepsilon^2)\big[g_{\varepsilon}(\overline{Y},\overline{Z})\g(\overline{X},\overline{W})-g_{\varepsilon}(\overline{X},\overline{Z})\g(\overline{Y},\overline{W})\big]$$
and 
$$\R(\overline{X},E_1)E_1=\varepsilon^2\, \overline{X}, \qquad \R(E_1,\overline{Y})E_1=-\varepsilon^2\, \overline{Y}.$$
Therefore, we get
$$\begin{aligned}\g\big(\R(X,Y)Z,W\big)=&(4+3\varepsilon^2)\big[g_{\varepsilon}(\overline{Y},\overline{Z})\g(\overline{X},\overline{W})-g_{\varepsilon}(\overline{X},\overline{Z})\g(\overline{Y},\overline{W})\big]\\
&+\varepsilon^2\big[y\,z\,\g(\overline{X},\overline{W})-x\,z\,\g(\overline{Y},\overline{W})+w\,x\,\g(\overline{Y},\overline{Z})-y\,w\,\g(\overline{X},\overline{Z})\big]\\
=&\g\big((4+3\varepsilon^2)\big[g_{\varepsilon}(Y,Z)X-g_{\varepsilon}(X,Z)Y\big]\\
	&+ 4(1+\varepsilon^2)\big[g_{\varepsilon}(Y,E_1)g_{\varepsilon}(Z,E_1)X-g_{\varepsilon}(X,E_1)g_{\varepsilon}(Z,E_1)Y\\
	&+g_{\varepsilon}(X,E_1)g_{\varepsilon}(Y,Z)\,E_1-g_{\varepsilon}(Y,E_1)g_{\varepsilon}(X,Z)\,E_1\big], \, W\big).
\end{aligned}$$
 As $W$ is arbitrary, we obtain the equation~\eqref{R2}
\end{proof}

We finish this section, recalling that
the isometry group of $\s^3_{\varepsilon}$ can be identified with:
$$
\{Q\in \mathrm{O}(4)\colon Q J_{1}=\pm  J_{1}Q\}\,,
$$
where $ J_{1}$ is the complex structure of $\r^4$ defined by
$$
 J_{1} =\begin{pmatrix}
 0&-1 & 0 & 0 \\
 1&0 & 0 & 0 \\
 0&0 & 0 & -1 \\
  0&0 & 1 & 0 \\
 \end{pmatrix},
 $$
while $\mathrm{O}(4)$ is the orthogonal group.  In \cite{MO}, Montaldo and Onnis describe explicitly a $1$-parameter family $Q(v)$ of $4\times 4$ orthogonal matrices commuting (respectively, anticommuting) with $J_1$ 
 by using four functions $\xi_1,\xi_2,\xi_3$ and $\xi$ as:
\begin{equation}\label{eq-descrizione-A}
Q(\xi,\xi_1,\xi_2,\xi_3)(v)=
\begin{pmatrix}
{\mathbf r}_1(v)\\
\pm J_1{\mathbf r}_1(v)\\
\cos\xi(v) J_2 {\mathbf r}_1(v)+\sin\xi(v) J_3 {\mathbf r}_1(v)\\
\mp\cos\xi(v) J_3 {\mathbf r}_1(v)\pm\sin\xi(v) J_2 {\mathbf r}_1(v)
\end{pmatrix}\,,
\end{equation}
where
$$
{\mathbf r}_1(v)=(\cos\xi_1(v)\cos\xi_2(v), -\cos\xi_1(v)\sin\xi_2(v), \sin\xi_1(v)\cos\xi_3(v),-\sin\xi_1(v)\sin\xi_3(v))
$$
and 
$$
 J_{2} =\begin{pmatrix}
 0&0 & 0 & -1 \\
 0&0 & -1 & 0 \\
 0&1& 0 & 0 \\
  1&0 & 0 & 0 \\
 \end{pmatrix}\,,\qquad
 J_{3} =\begin{pmatrix}
 0&0 & -1 &0 \\
 0&0 & 0 & 1 \\
 1&0& 0 & 0 \\
0&-1 & 0 & 0 \\
 \end{pmatrix}\,.
 $$
\section{The structure equations for surfaces in $\s^3_{\varepsilon}$}
In this section, we determine the Gauss and Codazzi equations for an oriented pseudo-Riemannian  surface $M$ immersed into $\s^3_{\varepsilon}$. In particular, in the Proposition~\ref{GC} we will prove that these equations involve the metric of $M$, its
shape operator $A$, the tangential projection $T$ of the Hopf vector field $E_1$ and the {\it angle function} $\nu:=g_\varepsilon(N,E_1)\,g_\varepsilon(N,N)$, where $N$ is the unit normal to $M$. \\

First of all, we remember that the surface $M$ is called {\it spacelike} if the induced metric on $M$ by the immersion is Riemannian, and {\it timelike} if the induced metric is Lorentzian. Also, $\g(N,N)=\lambda$, where $\lambda=-1$ if $M$ is a spacelike surface, and $\lambda=1$ if $M$ is timelike.\\

The Gauss and Weingarten formulas, for all $X,Y\in C(TM)$, are  
\begin{equation}
\begin{aligned}
\n_X Y&=\nabla_X Y+ \alpha(X,Y),\\
\n_X N&=-A(X),
\end{aligned}
\end{equation}
where $\nabla$ is the Levi-Civita connection on $M$ and $\alpha$ the second fundamental form with respect to the immersion. In this way, we have 
$$\alpha(X,Y)=\lambda\,\g\big(A(X),Y\big)\,N, \qquad X,Y\in C(TM).$$
Projecting the vector field $E_1$ onto $TM$ we obtain
$$E_1=T+\nu N,$$
where $\nu:M\to\r$ is the angle function. The tangent part of $E_1$, the vector field $T$, satisfies 
\begin{equation}\label{gT}
\g(T,T)=-(1+\lambda \, \nu^2).
\end{equation}
Also, with respect to this decomposition of $E_1$, for all $X\in C(TM)$, we have
\begin{equation*}
\begin{aligned}
\n_X E_1=&\n_X T+X(\nu)\, N+\nu\, \n_X N\\
=&\nabla_X T+X(\nu)\, N+\lambda\, \g\big(A(X),T\big)\, N-\nu \, A(X).
\end{aligned}
\end{equation*}
On the other hand, equation~\eqref{nablaE1} gives
$$\n_X E_1=-\varepsilon\, \lambda \, \g(JX,T)\, N+\varepsilon\, \nu \, JX,$$
where $JX:=N\wedge X$ satisfies
\begin{equation}\label{JX}
\g\big(JX,JY\big)=-\lambda\,\g(X,Y), \qquad J^2X=\lambda\, X.
\end{equation}
Then, comparing the tangent and normal components, we obtain the following equations:
\begin{equation}\label{nablaT}
\nabla_X T=\nu\big(A(X)+\varepsilon\, JX\big)
\end{equation}
and 
\begin{equation}\label{X(nu)}
X(\nu)=-\lambda \, \g\big(A(X)+\varepsilon\, JX,T\big).
\end{equation}

Now, we will give the expressions of the Gauss and Codazzi equations for a pseudo-Riemannian surface $M$ immersed into $\s^3_{\varepsilon}$.
\begin{proposition}\label{GC}
Under the above notation, the Gauss and Codazzi equations in $\s^3_{\varepsilon}$ are given, respectively, by:
\begin{equation}\label{eq-gauss}
K=\overline{K}+\lambda\,\mathrm{det} A=-\varepsilon^2+\lambda\,[\mathrm{det} A-4\,\nu^2\,(1+\varepsilon^2)]
    \end{equation}
and
    \begin{equation}\label{eq-codazzi}
\nabla_X A(Y)-\nabla_Y A(X)-A[X,Y]=4\,\lambda\,\nu\,(1+\varepsilon^2)\,[g_\varepsilon(X,T)Y-g_\varepsilon(Y,T)X],
    \end{equation}
    where $X$ and $Y$ are tangent vector fields on $M$, $K$ is the Gauss curvature of $M$ and $\overline{K}$ denotes the sectional curvature in $\s^3_{\varepsilon}$ of the plane tangent to $M$. 
    \end{proposition}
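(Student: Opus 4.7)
My plan is to combine the classical codimension-one semi-Riemannian Gauss--Codazzi identities with the explicit curvature formula of Proposition~\ref{curv}. The universal identities, obtained by expanding $\R(X,Y)Z = \n_X\n_Y Z - \n_Y\n_X Z - \n_{[X,Y]}Z$ using $\n_X Y = \nabla_X Y + \lambda\,\g(AX,Y)N$ and $\n_X N = -A(X)$, read
$$\g(\R(X,Y)Z,W) = \g(R(X,Y)Z,W) - \lambda\bigl[\g(AY,Z)\g(AX,W) - \g(AX,Z)\g(AY,W)\bigr],$$
together with the tangential identity
$$\bigl(\R(X,Y)N\bigr)^\top = \nabla_Y A(X) - \nabla_X A(Y) + A[X,Y],$$
the latter after the normal contributions $\lambda\,\g(AX,AY)N$ cancel. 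Specialising the first identity at a basis $\{e_1,e_2\}$ of $T_pM$ and dividing by $D := \g(e_1,e_1)\g(e_2,e_2)-\g(e_1,e_2)^2$, the correction collapses to $\lambda\det A$ via $\det[\g(Ae_i,e_j)] = D\det A$, giving $K = \overline{K} + \lambda\det A$.

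It remains to evaluate $\overline{K}$ and the tangential part of $\R(X,Y)N$ from Proposition~\ref{curv}. For $\overline{K}$, I substitute $X=e_1,Y=e_2,Z=e_2,W=e_1$ into the formula for $\R$; since $e_i \in T_pM$ and $E_1 = T + \nu N$, every $\g(e_i, E_1)$ becomes $\g(e_i, T)$, and the last two summands of the $4(1+\varepsilon^2)$ block (which return $E_1$ in the output slot) also contribute via $\g(E_1,e_1) = \g(T,e_1)$. A short algebraic step using the two-dimensional identity
$$\g(e_1,T)^2\g(e_2,e_2) - 2\g(e_1,T)\g(e_2,T)\g(e_1,e_2) + \g(e_2,T)^2\g(e_1,e_1) = \g(T,T)\,D,$$
verified by expanding $T$ in the basis $\{e_1,e_2\}$, yields $\g(\R(e_1,e_2)e_2,e_1) = D\bigl[(4+3\varepsilon^2) + 4(1+\varepsilon^2)\g(T,T)\bigr]$. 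Inserting $\g(T,T) = -(1+\lambda\nu^2)$ from~\eqref{gT} produces $\overline{K} = -\varepsilon^2 - 4\lambda\nu^2(1+\varepsilon^2)$, and~\eqref{eq-gauss} follows.

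For the Codazzi side, I set $Z = N$ in Proposition~\ref{curv}. Since $\g(X,N) = \g(Y,N) = 0$ and $\g(N,E_1) = \nu\lambda$, only the first two summands of the $4(1+\varepsilon^2)$ block survive, giving
$$\R(X,Y)N = 4(1+\varepsilon^2)\nu\lambda\bigl[\g(Y,T)X - \g(X,T)Y\bigr],$$
which is already tangent to $M$. Equating this to $\nabla_Y A(X) - \nabla_X A(Y) + A[X,Y]$ and rearranging yields exactly~\eqref{eq-codazzi}.

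The main obstacle I anticipate is not any single step but consistent sign bookkeeping in the Lorentzian setting: the factor $\lambda = \pm 1$ enters through $\alpha(X,Y) = \lambda\,\g(AX,Y)N$, through $\g(T,T) = -(1+\lambda\nu^2)$, and through $\g(N,E_1) = \nu\lambda$, and one must track which occurrences collapse via $\lambda^2 = 1$ and which remain in the final formulas. The two-dimensional identity reducing the quadratic form in $\g(e_i,T)$ to $\g(T,T)\,D$ is the small algebraic lemma that turns the generic curvature formula of Proposition~\ref{curv} into a statement depending only on the angle function $\nu$.
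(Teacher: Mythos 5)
Your proposal is correct and follows essentially the same route as the paper: quote (or re-derive) the standard semi-Riemannian hypersurface Gauss and Codazzi identities, then evaluate $\overline{K}$ and $\R(X,Y)N$ from the explicit curvature formula of Proposition~\ref{curv} using $E_1=T+\nu N$ and $\g(T,T)=-(1+\lambda\nu^2)$. The only cosmetic difference is that you compute $\overline{K}$ in an arbitrary basis via the adjugate identity $\g(e_2,T)^2\g(e_1,e_1)-2\g(e_1,T)\g(e_2,T)\g(e_1,e_2)+\g(e_1,T)^2\g(e_2,e_2)=\g(T,T)\,D$, whereas the paper shortcuts this by choosing an orthonormal frame; both your sign conventions and the final formulas check out.
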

\begin{proof}
Firstly, we prove the equation~\eqref{eq-gauss}. Recall that the Gauss equation for a pseudo-Riemannian hypersurface takes the form:
 \begin{equation}\label{G1}
     K=\overline{K}+\lambda\,\frac{\g(A(X),X)\g(A(Y),Y)-\g(A(X),Y)^2}{\g(X,X)\g(Y,Y)-\g(X,Y)^2},
    \end{equation}
where $\overline{K}$ denotes the sectional curvature in $\s^3_{\varepsilon}$ of the tangent plane to $M$.
Also,  supposing that $\{X,Y\}$ is a local orthonormal frame on $M$, i.e. $\g(X,X)=1$, $\g(X,Y)=~0$, $\g(Y,Y)=-\lambda$, the Proposition~\eqref{curv} gives 
\begin{equation*}
\begin{aligned}
\overline{K}(X,Y)=&-\lambda\,\g\big(\R(X,Y)Y,X\big)\\
=& -\varepsilon^2-4 \lambda\,\nu^2\,(1+\varepsilon^2).
\end{aligned}
\end{equation*}
Since $X$ and $Y$ are orthonormal, we have that
$$\mathrm{det} A=\frac{\g(A(X),X)\g(A(Y),Y)-\g(A(X),Y)^2}{\g(X,X)\g(Y,Y)-\g(X,Y)^2}.$$
Combining the above expressions, we obtain \eqref{eq-gauss}.\\

As regards equation \eqref{eq-codazzi}, we consider the Codazzi equation for hypersurfaces, that is given by:
    $$\g(\R(X,Y)Z,N)=\g(\nabla_X A(Y)-\nabla_Y A(X)-A[X,Y],Z).$$
Since Proposition~\ref{curv} implies that
$$\R(X,Y)N=-4\,\lambda\,\nu\,(1+\varepsilon^2)\,[g_\varepsilon(X,T)Y-g_\varepsilon(Y,T)X],$$
the result follows from the arbitrariness of $Z$.
\end{proof} 

\section{Constant angle surfaces in $\s^3_{\varepsilon}$}	
We start this section giving the following definition:
\begin{definition}
	Let $M$ be an oriented pseudo-Riemannian surface in the Lorentzian Berger sphere $\s^3_{\varepsilon}$ and let $N$ be a unit normal vector field, with $\g(N,N)=\lambda$. We say that $M$ is an {\it helix surface} or {\it constant angle surface} if the angle function $\nu:=\lambda\,\g(N,E_1)$ is constant at every point of the surface.
\end{definition}

 We observe that if $M$ is a constant angle spacelike surface, then $|\nu|> 1$. In fact, if $|\nu|=1$, then the vector fields $E_2$ and $E_3$  would be tangent to the surface $M$, which is absurd since the horizontal distribution of the Hopf map $\psi$ is not integrable. Moreover, we note that if $M$ is a timelike surface with $\nu=0$, we have that $E_1$ is always tangent to $M$ and, therefore, $M$ is a Hopf tube. Consequently, from now on,  for a helix timelike surface $M$ we will assume that the constant $\nu\neq 0$.

\begin{proposition}\label{princ}
	Let $M$ be an oriented helix surface with constant angle function $\nu$  in $\s^3_\varepsilon$ and let $N$ be its unit normal vector field, with $\g(N,N)=\lambda$. Then, we have that:
	\begin{itemize}
		\item[(i)] with respect to the basis $\{T,JT\}$, the matrix associated to the shape operator $A$ takes the following form:
		$$
		A=\left(
		\begin{array}{cc}
		0 & -\lambda\,\varepsilon \\
		\varepsilon & \mu\\
		\end{array}
		\right)\,,
		$$
		for some smooth function $\mu$ on $M$;
		\item[(ii)] the Levi-Civita connection $\nabla$ of $M$ is given by:
		$$
		\nabla_T T= 2\varepsilon\,\nu\, JT,\qquad \nabla_{JT} T=\mu\,\nu\, JT\,,
		$$
		$$
		\nabla_T JT=2\lambda\,\varepsilon\,\nu\, T,\qquad \nabla_{JT} JT=\lambda\,\mu\,\nu\, T\,;
		$$
		\item[(iii)] the Gauss curvature of $M$ is constant and it satisfies 
	\begin{equation}\label{cur-gauss}
		K=-4\,\lambda\,(1+\varepsilon^2)\,\nu^2;
	\end{equation}
		\item[(iv)] the function $\mu$ satisfies the following equation
		\begin{equation}\label{lambda}
		T(\mu)+\nu\,\mu^2+4\,\lambda\,\nu\,B=0, 
		\end{equation}
		where the constant
\begin{equation}\label{eq-defB}
B:=1+\lambda\,\nu^2\,(1+\varepsilon^2).
\end{equation}
	\end{itemize}
\end{proposition}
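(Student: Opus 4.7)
The scalar identity \eqref{X(nu)} is the key starting point. Since $\nu$ is assumed constant, $X(\nu)=0$ for every tangent $X$, so \eqref{X(nu)} reduces to the purely algebraic relation
\begin{equation*}
\g\bigl(A(X)+\varepsilon\,JX,\,T\bigr)=0,\qquad X\in C(TM).
\end{equation*}
For part (i), I would feed $X=T$ and $X=JT$ into this relation. The first choice, together with $\g(JT,T)=0$, forces $\g(A(T),T)=0$ and hence $A(T)=c\,JT$ for some scalar $c$. The second choice, combined with $J^{2}=\lambda\,\mathrm{id}$ and $\g(T,T)=-(1+\lambda\nu^{2})$, gives $\g(A(JT),T)=\varepsilon(\lambda+\nu^{2})$; self-adjointness of $A$ matches this with $\g(A(T),JT)=c(\lambda+\nu^{2})$, so $c=\varepsilon$. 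Writing $A(JT)=\alpha\,T+\mu\,JT$ and using the value of $\g(A(JT),T)$ just obtained (together with $\lambda+\nu^{2}=\lambda(1+\lambda\nu^{2})$) one reads off $\alpha=-\varepsilon\lambda$, leaving $\mu$ as the unique free scalar function. This is exactly the matrix displayed in (i).

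Part (ii) then follows by direct substitution. The formulas for $\nabla_{T}T$ and $\nabla_{JT}T$ come from plugging the matrix of $A$ into \eqref{nablaT} (using $J^{2}T=\lambda\,T$ to cancel the $T$--components). For $\nabla_{X}JT$, I would use metric compatibility: constancy of $\nu$ makes $\g(JT,JT)=\lambda+\nu^{2}$ constant, so $\g(\nabla_{X}JT,JT)=0$; combined with $\g(T,JT)=0$ and $\g(\nabla_{X}JT,T)=-\g(JT,\nabla_{X}T)$, where the right-hand side is already known from the preceding step, this pins down $\nabla_{T}JT$ and $\nabla_{JT}JT$.

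Part (iii) is then immediate: the matrix in (i) has $\det A=\varepsilon^{2}\lambda$, so the Gauss equation \eqref{eq-gauss} reads $K=-\varepsilon^{2}+\lambda\,(\varepsilon^{2}\lambda-4\nu^{2}(1+\varepsilon^{2}))$, and the two $\varepsilon^{2}$ terms cancel to leave \eqref{cur-gauss}.

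Part (iv) is the only step with genuine computational content. The plan is to apply the Codazzi equation \eqref{eq-codazzi} to $(X,Y)=(T,JT)$ and expand each of the three pieces $\nabla_{T}A(JT)$, $\nabla_{JT}A(T)$ and $A[T,JT]$ using the data of (i) and (ii); note in particular that $[T,JT]=\nabla_{T}JT-\nabla_{JT}T=2\lambda\varepsilon\nu\,T-\nu\mu\,JT$. The right-hand side of \eqref{eq-codazzi} simplifies, using $\g(JT,T)=0$ and $\g(T,T)=-(1+\lambda\nu^{2})$, to $-4\lambda\nu(1+\varepsilon^{2})(1+\lambda\nu^{2})\,JT$. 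I expect the $T$-components on the left to cancel automatically (this is the main obstacle, since several terms of the form $\varepsilon\lambda\mu\nu\,T$ enter with different signs from the three ingredients and one must check that they actually balance), while the $JT$-components assemble into $T(\mu)+\nu\mu^{2}-4\lambda\varepsilon^{2}\nu$. Equating the two and invoking the elementary identity
\begin{equation*}
\varepsilon^{2}-(1+\varepsilon^{2})(1+\lambda\nu^{2})=-\bigl(1+\lambda\nu^{2}(1+\varepsilon^{2})\bigr)=-B
\end{equation*}
then produces \eqref{lambda} exactly, which completes the proof.
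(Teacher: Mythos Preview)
Your proof is correct and follows essentially the same route as the paper: both start from \eqref{X(nu)} with $\nu$ constant to determine the shape operator, then feed (i) into \eqref{nablaT} for the connection, into \eqref{eq-gauss} for the curvature, and into the Codazzi equation \eqref{eq-codazzi} with $(X,Y)=(T,JT)$ for \eqref{lambda}. Your write-up is in fact slightly more explicit than the paper's in two places (the metric-compatibility argument for $\nabla_{X}JT$ and the identity $\varepsilon^{2}-(1+\varepsilon^{2})(1+\lambda\nu^{2})=-B$), but the underlying computations are identical.
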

\begin{proof} We start observing that if $M$ is spacelike (respectively, timelike), then $T$ is spacelike (respectively, timelike) and $JT$ is spacelike. 
	Also, from \eqref{gT} and \eqref{JX} we get 
	$$
	\g( T,T)=-(1+\lambda \, \nu^2), \qquad \g( JT,JT)=\lambda+\nu^2>0,\qquad \g( T,JT)=0\,.
	$$ 
	Then, from \eqref{JX} and \eqref{X(nu)}, we obtain that:
	\begin{equation*}
\begin{aligned}
\g(A(T),T)&=0,\\
\g(A(JT),T)&=\varepsilon\,(\lambda+\nu^2)
\end{aligned}
\end{equation*}
	 and, therefore, we have the expression of the matrix $A$ given in (i).  The Levi-Civita connection of $M$ is determined using \eqref{nablaT} and (i). Also, taking into account (i), from \eqref{eq-gauss} we obtain the Gauss curvature of $M$ as in \eqref{cur-gauss}.\\
	 
	Finally, \eqref{lambda} follows from the Codazzi equation~\eqref{eq-codazzi} by putting $X=T$, $Y=JT$ and using (ii). In fact, it is easy to verify that
 $$4\,\lambda\,\nu\,(1+\varepsilon^2)\,[g_\varepsilon(T,T)JT-g_\varepsilon(JT,T)T]=-4\,\nu\,(1+\varepsilon^2)\,(\lambda+\nu^2)$$ and
 $$\begin{aligned}&\nabla_T A(JT)-\nabla_{JT} A(T)-A[T,JT]\\&=
 \nabla_T (-\lambda\,\varepsilon\, T+\mu\, JT)-\nabla_{JT} (\varepsilon\, JT)-\big[\nu\,(2\lambda\,\varepsilon^2-\mu^2)\,JT+\varepsilon\,\lambda\,\mu\,\nu\, T\big]\\&=
 [T(\mu)+\nu\,\mu^2-4\lambda\,\epsilon^2\,\nu]\,JT.
 \end{aligned}$$
	\end{proof}

	\begin{remark}\label{value-B}
We observe that if $M$ is a spacelike (respectively, timelike) surface, then the constant $B$ is negative (respectively, positive). Therefore, in both cases we have that $\lambda\,B$ is positive.
Consequently, if a helix surface is minimal (i.e. $\trace A=0$), 
from (i) of the Proposition~\ref{princ} it follows that $\mu=0$ and, so, using \eqref{lambda} we get $\lambda\,\nu\,B=0$. Thus, $\nu=0$ and the surface is a timelike Hopf tube.
\end{remark}

As $g_\varepsilon (E_1,N)=\lambda\,\nu$ and $E_1$ is a timelike vector field, there exists a smooth function $\varphi$ on $M$ so that
$$N=-\lambda\,\nu\, E_1+\sqrt{\lambda+\nu^2}\cos\varphi\,E_2+\sqrt{\lambda+\nu^2}\sin\varphi\,E_3.$$
Therefore, 
\begin{equation}\label{eq:def-T}
T=E_1-\nu\,N=(1+\lambda\,\nu^2)\,E_1-\nu\sqrt{\lambda+\nu^2}\cos\varphi\,E_2-\nu\sqrt{\lambda+\nu^2}\sin\varphi\,E_3
\end{equation}
and 
$$
JT=\sqrt{\lambda+\nu^2}\,(\sin\varphi\,E_2-\cos\varphi\,E_3)\,.
$$
In addition,
\begin{equation}\begin{aligned}\label{eqTJ}
 A(T)&=-\n_T N=[T(\varphi)+\varepsilon^{-1}(2+\varepsilon^2)\,(1+\lambda\,\nu^2)+\lambda\,\varepsilon\,\nu^2]\,JT,\\
A(JT)&=-\n_{JT} N=JT(\varphi)\,JT-\lambda\,\varepsilon\,T\,.
\end{aligned}
\end{equation}
Comparing \eqref{eqTJ} with (i) of  Proposition~\ref{princ}, we have that
\begin{equation}\label{eqTJ1}
\left\{\begin{aligned}
JT(\varphi)&=\mu\,,\\
T(\varphi)&=-2\varepsilon^{-1}\,B\,.
\end{aligned}
\right.
\end{equation}
We point out that, as $$[T,JT]=\nu\,(2\lambda\,\varepsilon\, T-\mu\,JT),$$ the compatibility condition of system~\eqref{eqTJ1}:
$$(\nabla_T JT-\nabla_{JT} T)(\varphi)=[T,JT](\varphi)=T(JT(\varphi))-JT(T(\varphi))$$ is equivalent to \eqref{lambda}.\\

We now choose local coordinates $(u,v)$ on $M$ such that 
\begin{equation}\label{eq:local-coordinates}
\partial_u=T, \qquad \partial_v=a\,T+b\,JT,
\end{equation} for certain smooth functions $a=a(u,v)$ and $b=b(u,v)$. As
$$
0=[\partial_u,\partial_v]=(a_u+2\lambda\,\varepsilon\,\nu\, b)\,T+(b_u-\nu\,\mu\,b)\,JT\,,
$$
it results that
\begin{equation}\label{eqab}
\left\{\begin{aligned}
a_u&=-2\lambda\,\varepsilon\,\nu\, b,\\
b_u&=\nu\,\mu\,b\,.
\end{aligned}
\right.
\end{equation}
Also, we can write \eqref{lambda}  as
$$\mu_u+\nu\,\mu^2+4\,\lambda\,\nu\,B=0,$$
where the constant $\lambda\,B$ is  positive (see Remark~\ref{value-B}). So, by integration, we have:
\begin{equation}\label{eqlambda}
\mu(u,v)=2\,\sqrt{\lambda\,B}\tan \big(\eta(v)-2\,\nu\, \sqrt{\lambda\,B}\,u\big)\,,
\end{equation}
for some smooth function $\eta$ depending on $v$ and  we can solve  system~\eqref{eqab}.
As we are interested in only one coordinate system on the surface $M$  we only need one solution for $a$ and $b$, for example:
\begin{equation}\label{solab}
\left\{\begin{aligned}
a(u,v)&=\frac{\lambda\,\varepsilon}{\sqrt{\lambda\, B}}\sin \big(\eta(v)-2\nu\, \sqrt{\lambda \, B}\,u\big),\\
b(u,v)&=\cos\big(\eta(v)-2\nu \,\sqrt{\lambda \, B}\,u\big)\,.
\end{aligned}
\right.
\end{equation}
Therefore \eqref{eqTJ1} becomes
\begin{equation}\label{eqTJ2}\left\{\begin{aligned}
\varphi_u&=-2\varepsilon^{-1}B\,,\\
\varphi_v&=0\,,
\end{aligned}
\right.
\end{equation}
of which the general solution is given by
\begin{equation}
\varphi(u,v)=-2\varepsilon^{-1}\,B\,u+c\,,\qquad c\in\r.
\end{equation}

Using the previous results, we prove the following:
\begin{theorem}\label{position-vector}
	Let $M$ be a helix surface in the Lorentzian Berger sphere $\s^3_\varepsilon$ with constant angle function $\nu$.  Then, with respect to the local coordinates $(u,v)$ on $M$ defined in \eqref{eq:local-coordinates} and \eqref{solab},  the position vector $F$ of $M$ in $\r^4$ satisfies the equation:
	\begin{equation}\label{eqquarta}
	\frac{\partial^4F}{\partial u^4}+(\tilde{b}^2-2\tilde{a})\,\frac{\partial^2F}{\partial u^2}+\tilde{a}^2\,F=0\,,
	\end{equation}
	where
	\begin{equation}\label{eq:value-a-b}
	\tilde{a}=\lambda\,\varepsilon^{-2}\, B\,(\lambda+\nu^2)>0, \qquad \tilde{b}=-2\varepsilon^{-1}\, B
	\end{equation}
	and $B=1+\lambda\, \nu^2\,(1+\varepsilon^2)$.
\end{theorem}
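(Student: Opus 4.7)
The plan is to compute the first four Euclidean derivatives of the position vector $F\colon M\to\r^4$ and to eliminate the intermediate quantities algebraically. The central observation is that the parallelizing fields $X_1,X_2,X_3$ of $\s^3$ are linear functions of the position: at a point $F\in\s^3$ we have $X_i(F)=J_iF$, where $J_1,J_2,J_3$ are the three complex structures of $\r^4$ already introduced in the excerpt. In particular $E_1=\varepsilon^{-1}J_1F$, $E_2=J_2F$, $E_3=J_3F$, and the $J_i$ satisfy the quaternion relations $J_i^2=-I$ and $J_iJ_j+J_jJ_i=0$ for $i\neq j$. Since each $J_i$ is linear, the Euclidean directional derivative of the vector field $p\mapsto J_ip$ in any direction $Y$ is just $J_iY$.

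Substituting \eqref{eq:def-T} and the explicit expression $\varphi(u)=-2\varepsilon^{-1}Bu+c$ into $F_u=T$ gives
\begin{equation*}
F_u=M(u)\,F,\qquad M(u):=p\,J_1+q_1(u)\,J_2+q_2(u)\,J_3,
\end{equation*}
where $p:=(1+\lambda\nu^2)\varepsilon^{-1}$ is constant and $q_1,q_2$ are trigonometric in $\varphi(u)$, with the crucial identity $q_1^2+q_2^2=\nu^2(\lambda+\nu^2)=:\beta^2$ constant. The quaternion relations yield the three algebraic facts that drive the rest of the computation:
\begin{equation*}
M^2=-C^2\,I\ \text{with}\ C^2:=p^2+\beta^2,\qquad M''=-\varphi_u^{\,2}(M-p\,J_1),\qquad MM'+M'M=0,
\end{equation*}
the last because $q_1q_1'+q_2q_2'=\tfrac12(\beta^2)'=0$. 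In particular, the second derivative of $F$ satisfies
\begin{equation*}
F_{uu}+C^2\,F=M'\,F,
\end{equation*}
which is the mechanism that converts operators applied to $F$ into differential operators applied to $F$.

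Differentiating this identity twice more and using $M^2=-C^2I$, $MM'=-M'M$, $M''=-\varphi_u^{\,2}(M-pJ_1)$ and $M'''=-\varphi_u^{\,2}M'$ repeatedly to eliminate all occurrences of $J_1F$, $MF$ and $M'F$, a direct computation produces
\begin{equation*}
F_{uuuu}+\bigl(\varphi_u^{\,2}+2p\varphi_u+2C^2\bigr)\,F_{uu}+\bigl(p\varphi_u+C^2\bigr)^2\,F=0.
\end{equation*}

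A short algebraic check using $p=(1+\lambda\nu^2)\varepsilon^{-1}$, $\varphi_u=-2\varepsilon^{-1}B$ and $B=1+\lambda\nu^2(1+\varepsilon^2)$ then reveals the neat identity $p\varphi_u=-2C^2$; hence $(p\varphi_u+C^2)^2=C^4$ and $\varphi_u^{\,2}+2p\varphi_u+2C^2=\varphi_u^{\,2}-2C^2$. Setting $\tilde a:=C^2$ and $\tilde b:=\varphi_u$, this is precisely \eqref{eqquarta}; expanding $C^2$ verifies $\tilde a=\lambda\varepsilon^{-2}B(\lambda+\nu^2)$, and $\tilde a>0$ follows from $\lambda B>0$ (Remark~\ref{value-B}) together with $\lambda+\nu^2>0$. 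The main obstacle is the quaternionic bookkeeping in the fourth derivative: the anticommutation $MM'+M'M=0$ and the scalar-square properties $M^2=-C^2I$, $(M')^2=-\beta^2\varphi_u^{\,2}I$ must be applied repeatedly in the right order, and the whole argument depends on the "magical" collapse $(p\varphi_u+C^2)^2=C^4$, which in turn rests on the single algebraic identity $p\varphi_u=-2C^2$ — so I would verify that identity first and organize the rest of the computation around it.
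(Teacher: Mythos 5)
Your proposal is correct and follows essentially the same route as the paper: both exploit that $X_i=J_iF$ is linear in the position vector to write $F_u=T$ as a first-order linear system $F_u=M(u)F$ (the paper's system \eqref{eqprime}) and then differentiate to fourth order using the constancy of $\varphi_u=-2\varepsilon^{-1}B$. The only difference is bookkeeping — you organize the computation through the quaternion algebra of $M$, $M'$, $M''$ and the identity $p\varphi_u=-2C^2$, while the paper works componentwise and passes through the cleaner intermediate second-order identity \eqref{eqsegunda}, namely $F_{uu}=\tilde a\,F-\tilde b\,J_1F_u$ — and your final coefficients agree with \eqref{eq:value-a-b}.
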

\begin{proof}
	Let $M$ be a helix surface and let $F$ be the position vector of $M$ in $\r^4$. Then, with respect to the local coordinates $(u,v)$ on $M$ defined in \eqref{eq:local-coordinates} and \eqref{solab}, we can write $F(u,v)=(F_1(u,v),\dots,F_4(u,v))$. By definition, taking into account \eqref{eq:def-T}, we have that
	$$
	\begin{aligned}\partial_u F&=(\partial_uF_1,\partial_uF_2,\partial_uF_3,\partial_uF_4)=T\\
	&=\sqrt{\lambda+\nu^2}\,[\lambda\,\sqrt{\lambda+\nu^2}\,{E_1}_{|F(u,v)}-\nu\cos\varphi\,{E_2}_{|F(u,v)}-\nu\sin\varphi\,{E_3}_{|F(u,v)}]\,.
	\end{aligned}
	$$
	Using the expression of $E_1$, $E_2$ and $E_3$ with respect to the coordinates vector fields of $\r^4$, the latter implies that
	\begin{equation}\label{eqprime}\left\{\begin{aligned}
	\partial_uF_1&=\sqrt{\lambda+\nu^2}\,(-\varepsilon^{-1}\lambda\,\sqrt{\lambda+\nu^2}\,F_2+\nu\cos\varphi\,F_4+\nu\sin\varphi\,F_3)\,,\\
	\partial_uF_2&=\sqrt{\lambda+\nu^2}\,(\varepsilon^{-1}\lambda\,\sqrt{\lambda+\nu^2}\,F_1+\nu\cos\varphi\,F_3-\nu\sin\varphi\,F_4)\,,\\
	\partial_uF_3&=-\sqrt{\lambda+\nu^2}\,(\varepsilon^{-1}\lambda\,\sqrt{\lambda+\nu^2}\,F_4+\nu\cos\varphi\,F_2+\nu\sin\varphi\,F_1)\,,\\
	\partial_uF_4&=\sqrt{\lambda+\nu^2}\,(\varepsilon^{-1}\lambda\,\sqrt{\lambda+\nu^2}\,F_3-\nu\cos\varphi\,F_1+\nu\sin\varphi\,F_2)\,.\\
	\end{aligned}
	\right.
	\end{equation}
	Moreover, taking the derivative with respect to $u$ of \eqref{eqprime} we get
\begin{equation}\label{eqsegunda}\left\{\begin{aligned}
(F_1)_{uu}&=\tilde{a}\,F_1+\tilde{b}\,(F_2)_u\,,\\
(F_2)_{uu}&=\tilde{a}\,F_2-\tilde{b}\,(F_1)_u\,,\\
(F_3)_{uu}&=\tilde{a}\,F_3+\tilde{b}\,(F_4)_u\,,\\
(F_4)_{uu}&=\tilde{a}\,F_4-\tilde{b}\,(F_3)_u\,,\\
\end{aligned}
\right.
\end{equation}
where, using \eqref{eqTJ2},
	$$
		\tilde{a}=\langle F_u,F_u\rangle=\lambda\,\varepsilon^{-2}\, B\,(\lambda+\nu^2), \qquad \tilde{b}=-2\varepsilon^{-1}\, B.
	$$ 
	Finally, taking twice the derivative of \eqref{eqsegunda} with respect to $u$ and using  \eqref{eqprime}--\eqref{eqsegunda} we obtain the  equation~\eqref{eqquarta}.
\end{proof}
Integrating \eqref{eqquarta}, we have the following
\begin{corollary}\label{cor-Fuv}
Let $M$ be a helix surface in $\s^3_\varepsilon$ with constant angle function $\nu$.  Then, with respect to the local coordinates $(u,v)$ on $M$ defined in \eqref{eq:local-coordinates} and \eqref{solab},  the position vector $F$ of $M^2$ in $\r^4$
is given by
$$
F(u,v)=\cos(\alpha_1 u)\,g^1(v)+\sin(\alpha_1 u)\,g^2(v)+\cos(\alpha_2 u)\,g^3(v)+\sin(\alpha_2 u)\,g^4(v),
$$
where
$$
\alpha_{1,2}=\frac{1}{\varepsilon}(\lambda\,B\pm\varepsilon\,|\nu|\, \sqrt{\lambda\,B} )
$$  
are real constant, while the $g^i(v)$, $i\in\{1,\dots,4\}$, are mutually orthogonal vectors fields  in $\r^4$, depending only on $v$, such that 
\begin{equation}\label{G11}
g_{11}=\langle g^1(v),g^1(v)\rangle=g_{22}=\langle g^2(v),g^2(v)\rangle=\frac{\lambda\,\varepsilon}{2B} \alpha_2\,,
\end{equation}
\begin{equation}\label{G33}
g_{33}=\langle g^3(v),g^3(v)\rangle=g_{44}=\langle g^4(v),g^4(v)\rangle=\frac{\lambda\,\varepsilon}{2B} \alpha_1\,.
\end{equation}
\end{corollary}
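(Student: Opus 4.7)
The plan is to integrate the ODE~\eqref{eqquarta} explicitly and then pin down the inner products of the coefficient vector fields $g^i(v)$ by combining the condition that $F$ takes values in $\s^3\subset\r^4$ with the identity $\langle F_u,F_u\rangle=\tilde a$ already observed in the proof of Theorem~\ref{position-vector}.

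First, I would factor the characteristic polynomial $r^4+(\tilde b^2-2\tilde a)r^2+\tilde a^2$ of~\eqref{eqquarta}. The form suggested by the statement indicates the factorisation $(r^2+\alpha_1^2)(r^2+\alpha_2^2)$, which amounts to the two algebraic identities
\[
\alpha_1^2+\alpha_2^2=\tilde b^2-2\tilde a,\qquad \alpha_1\alpha_2=\tilde a.
\]
Both are direct consequences of the elementary relation $\lambda B-\varepsilon^2\nu^2=\lambda+\nu^2$ (which follows from $B=1+\lambda\nu^2(1+\varepsilon^2)$ together with $\lambda^2=1$). Since $\lambda B>0$ by Remark~\ref{value-B}, the numbers $\alpha_1,\alpha_2$ are real; moreover $\alpha_2=\sqrt{\lambda B}\,(\sqrt{\lambda B}-\varepsilon|\nu|)/\varepsilon>0$ (using $|\nu|>1$ in the spacelike case $\lambda=-1$), and $\alpha_1>\alpha_2$ because $\nu\neq 0$. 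Consequently, the general $\r^4$-valued solution of~\eqref{eqquarta} has the asserted form, with four vector-valued constants of integration $g^1(v),\dots,g^4(v)\in\r^4$.

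Next, to obtain the inner products $g_{ij}(v)=\langle g^i(v),g^j(v)\rangle$, I would substitute the expression for $F$ into the two pointwise constraints $\langle F,F\rangle\equiv 1$ (since $F\in\s^3$) and $\langle F_u,F_u\rangle\equiv\tilde a$. Via the product-to-sum identities, each side becomes a trigonometric polynomial in $u$ supported on the five frequencies $\{0,\,2\alpha_1,\,2\alpha_2,\,\alpha_1+\alpha_2,\,\alpha_1-\alpha_2\}$. Generically these are distinct, and matching the coefficients of the first constraint already yields
\[
g_{11}=g_{22},\quad g_{33}=g_{44},\quad g_{12}=g_{13}=g_{14}=g_{23}=g_{24}=g_{34}=0,\quad g_{11}+g_{33}=1.
\]
In the exceptional case where two of the above frequencies coincide (for instance $\alpha_1=3\alpha_2$), a pair of equations merges; pairing it with the corresponding equation from the second constraint produces a $2\times 2$ linear system whose determinant is a non-zero multiple of $\alpha_1+\alpha_2=2\lambda B/\varepsilon$, so the same orthogonality is recovered.

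The normalisation comes from the constant terms of the two expansions: $g_{11}+g_{33}=1$ and $\alpha_1^2 g_{11}+\alpha_2^2 g_{33}=\tilde a$. Solving this small system and using $\tilde a=\alpha_1\alpha_2$ gives $g_{11}=\alpha_2/(\alpha_1+\alpha_2)$, which via $\alpha_1+\alpha_2=2\lambda B/\varepsilon$ and $\lambda^2=1$ becomes $g_{11}=\lambda\varepsilon\alpha_2/(2B)$, as required by~\eqref{G11}; the expression for $g_{33}$ in~\eqref{G33} follows symmetrically. The main obstacle I anticipate is the bookkeeping in the middle step: one must track how each of the ten products $g_{ij}$ contributes to each harmonic on both sides and check that the (possible) frequency coincidences are always neutralised by combining the two constraints. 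Once this accounting is complete, the remaining computations are elementary.
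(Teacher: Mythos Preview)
Your argument is correct, and it follows a genuinely different route from the paper. The paper derives ten scalar identities involving the Euclidean products of $F,F_u,F_{uu},F_{uuu}$ (equations~\eqref{eq:Fprocuct}), evaluates them at $u=0$, and solves the resulting $10\times 10$ linear system in the $g_{ij}$; this avoids any discussion of frequency coincidences, at the price of setting up and verifying several higher-order relations such as $\langle F_{uuu},F_{uuu}\rangle=E$. You instead exploit only the two lowest-order relations $\langle F,F\rangle\equiv 1$ and $\langle F_u,F_u\rangle\equiv\tilde a$, but as identities in $u$, matching Fourier coefficients on the five frequencies $0,\,2\alpha_1,\,2\alpha_2,\,\alpha_1\pm\alpha_2$; this is more economical in input and makes the orthogonality transparent, but forces the case analysis when $\alpha_1=3\alpha_2$. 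Your handling of that degeneration is right: the merged cosine pair gives a $2\times 2$ system with determinant $\alpha_2(\alpha_1+\alpha_2)\neq 0$, and the merged sine pair yields the same determinant, so combined with the (unmerged) $\alpha_1+\alpha_2$ equations one recovers $g_{13}=g_{14}=g_{23}=g_{24}=g_{34}=0$ and $g_{33}=g_{44}$. One small point worth stating explicitly in a final write-up: the only possible coincidence among the five nonnegative frequencies is $2\alpha_2=\alpha_1-\alpha_2$, since the others all reduce to $\alpha_1=\alpha_2$ or $\alpha_2=0$, both excluded.
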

\begin{proof}
First of all, from the Remark~\ref{value-B}, we conclude that 
$$\tilde{b}^2-2\tilde{a}=2\varepsilon^{-2}\,\lambda\,B\,(\lambda+\nu^2+2\varepsilon^2\,\nu^2)>0,\qquad \tilde{b}^2-4\tilde{a}=4\lambda\,B\,\nu^2>0.$$
Also, integrating the equation~\eqref{eqquarta} we obtain 
$$
F(u,v)=\cos(\alpha_1 u)\,g^1(v)+\sin(\alpha_1 u)\,g^2(v)+\cos(\alpha_2 u)\,g^3(v)+\sin(\alpha_2 u)\,g^4(v)\,,
$$
where
$$
\alpha_{1,2}=\sqrt{\frac{\tilde{b}^2-2\tilde{a}\pm\sqrt{\tilde{b}^4-4\tilde{a}\tilde{b}^2}}{2}}
$$ 
are two constants, while the $g^i(v)$, $i\in\{1,\dots,4\}$,  are vector fields in $\r^4$ which depend only on $v$. Also, using \eqref{eq:value-a-b} we can write
$$
\alpha_{1,2}=\frac{1}{\varepsilon}(\lambda\,B\pm\varepsilon\,|\nu|\, \sqrt{\lambda\,B}).
$$ 
Now, as $|F|^2=1$ and using the equations~\eqref{eqquarta}, \eqref{eqprime},  \eqref{eqsegunda} given in the Proposition~\ref{position-vector}, we find that the position vector $F(u,v)$  and its derivatives  must satisfy the following relations:
\begin{equation}\label{eq:Fprocuct}
\begin{array}{lll}
\langle  F,F\rangle=1\,,& \langle F_u,F_u\rangle=\tilde{a}\,,& \langle F,F_u\rangle=0\,,\\
\langle F_u,F_{uu}\rangle=0\,,&\langle F_{uu},F_{uu}\rangle=D\,,&\langle F,F_{uu}\rangle=-\tilde{a},\\
\langle F_u,F_{uuu}\rangle=-D\,,&\langle F_{uu},F_{uuu}\rangle=0\,,&\langle F,F_{uuu}\rangle=0\,,\\
\langle F_{uuu},F_{uuu}\rangle=E,& \qquad &
\end{array}
\end{equation}
where 
$$
D=\tilde{a}\,\tilde{b}^2-3\,\tilde{a}^2\,,\qquad E=(\tilde{b}^2-2\tilde{a})\,D-\tilde{a}^3\,.
$$
Putting  $g_{ij}(v):=\langle g^i(v),g^j(v)\rangle$ and evaluating  the relations \eqref{eq:Fprocuct}  in $(0,v)$, it results that:
\begin{equation}\label{uno}
    g_{11}+g_{33}+2g_{13}=1\,,
\end{equation}
\begin{equation}\label{due}
    \alpha_1^2\,g_{22}+\alpha_2^2\,g_{44}+2\alpha_1\alpha_2\,g_{24}=\tilde{a}\,,
\end{equation}
\begin{equation}\label{tre}
    \alpha_1\,g_{12}+\alpha_2\,g_{14}+\alpha_1\,g_{23}+\alpha_2\,g_{34}=0\,,
\end{equation}
\begin{equation}\label{quatro}
    \alpha_1^3\,g_{12}+\alpha_1\alpha_2^2\,g_{23}+\alpha_1^2\alpha_2\,g_{14}+\alpha_2^3g_{34}=0\,,
\end{equation}
\begin{equation}\label{cinque}
    \alpha_1^4\,g_{11}+\alpha_2^4\,g_{33}+2\alpha_1^2\alpha_2^2\,g_{13}=D\,,
\end{equation}
\begin{equation}\label{sei}
    \alpha_1^2\,g_{11}+\alpha_2^2\,g_{33}+(\alpha_1^2+\alpha_2^2)\,g_{13}=\tilde{a}\,,
\end{equation}
\begin{equation}\label{sette}
    \alpha_1^4\,g_{22}+\alpha_1^3\alpha_2\,g_{24}+\alpha_1\alpha_2^3\,g_{24}+\alpha_2^4\,g_{44}=D\,,
\end{equation}
\begin{equation}\label{otto}
    \alpha_1^5\,g_{12}+\alpha_1^3\alpha_2^2\,g_{23}+\alpha_1^2\alpha_2^3\,g_{14}+\alpha_2^5\,g_{34}=0\,,
\end{equation}
\begin{equation}\label{nove}
    \alpha_1^3\,g_{12}+\alpha_1^3\,g_{23}+\alpha_2^3\,g_{14}+\alpha_2^3\,g_{34}=0\,,
\end{equation}
\begin{equation}\label{dieci}
    \alpha_1^6\,g_{22}+\alpha_2^6\,g_{44}+2\alpha_1^3\alpha_2^3\,g_{24}=E\,.
\end{equation}
From \eqref{tre}, \eqref{quatro}, \eqref{otto}, \eqref{nove}, it follows that 
$$
g_{12}=g_{14}=g_{23}=g_{34}=0\,.
$$
Also, from  \eqref{uno}, \eqref{cinque} and \eqref{sei}, we get
$$
g_{11}=\frac{D-2\tilde{a}\,\alpha_2^2+\alpha_2^4}{(\alpha_1^2-\alpha_2^2)^2}\,,\qquad g_{13}=0\,,\qquad
g_{33}=\frac{D-2\tilde{a}\,\alpha_1^2+\alpha_1^4}{(\alpha_1^2-\alpha_2^2)^2}.
$$
Moreover, using \eqref{due}, \eqref{sette} and \eqref{dieci}, we obtain
$$
g_{22}=\frac{E-2D\,\alpha_2^2+\tilde{a}\,\alpha_2^4}{\alpha_1^2\,(\alpha_1^2-\alpha_2^2)^2}\,,\qquad g_{24}=0\,,\qquad
g_{44}=\frac{E-2D\,\alpha_1^2+\tilde{a}\,\alpha_1^4}{\alpha_2^2\,(\alpha_1^2-\alpha_2^2)^2}\,.
$$
Finally, a long computation gives
$$
g_{11}=g_{22}=\frac{\lambda\,\varepsilon}{2B}\, \alpha_2\,,\qquad g_{33}=g_{44}=\frac{\lambda\,\varepsilon}{2B}\, \alpha_1
\,.
$$
\end{proof}
\begin{remark}\label{g13}
As $g_{13}=0$, from \eqref{uno} it results that $g_{11}+g_{33}=1$.
\end{remark}

\section{The characterization theorem of the helix surfaces in $\s^3_\varepsilon$}

We start this section proving a proposition that gives the conditions under which an immersion defines a helix surface in $\s^3_\varepsilon$. Before, we observe that if $F$ is the position vector of a helix surfaces in $\s^3_\varepsilon$, we have that
$$
{J_1}F(u,v)={X_1}_{|F(u,v)}=\varepsilon\,{E_1}_{|F(u,v)}=\varepsilon\,(F_u+\nu\,N)
$$ 
and, thus, using the equations~\eqref{eqquarta}--\eqref{eq:Fprocuct},  we obtain the following identities:
\begin{equation}\label{eq-fu-jf-main}
\begin{aligned} 
&\langle{J_1}F,F_u\rangle=\varepsilon^{-1}\,(1+\lambda\,\nu^2),\\&\langle{J_1}F,F_{uu}\rangle=0\,,\\
&\langle F_u,{J_1}F_{uu}\rangle=\tilde{a}\,[\varepsilon^{-1}\,(1+\lambda\,\nu^2)+\tilde{b}]:=I\,,\\
&\langle{J_1}F_u,F_{uuu}\rangle=0\,,\\
&\langle{J_1}F_u,F_{uu}\rangle+\langle{J_1}F,F_{uuu}\rangle=0\,,\\
&\langle{J_1}F_{uu},F_{uuu}\rangle+\langle{J_1}F_u,F_{uuuu}\rangle=0\,.
\end{aligned}
\end{equation}

\begin{proposition}\label{pro-viceversa}
Let $F:\Omega\to\s^3_\varepsilon$ be an immersion from an open set $\Omega\subset\r^2$, with local coordinates $(u,v)$.
Then, $F(\Omega)\subset \s^3_\varepsilon$ defines a helix  spacelike (respectively, timelike) surface of constant angle function $\nu$ and such that the projection of $E_1=\varepsilon^{-1}\,J_1F$ to the tangent space of $F(\Omega)\subset \s^3_\varepsilon$ is $F_u$, if and only if
\begin{equation}\label{viceversa1}
g_{\varepsilon}(F_u,F_u)=g_{\varepsilon}(E_1,F_u)=-(1+\lambda\,\nu^2)
\end{equation}
and
\begin{equation}\label{viceversa2}
g_{\varepsilon}(F_u,F_v)-g_{\varepsilon}(F_v,E_1)=0,
\end{equation}
where $\lambda=-1$ (respectively, $\lambda=1$).
\end{proposition}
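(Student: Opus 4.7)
The proof splits naturally into necessity and sufficiency, and the strategy is to read the two metric identities in \eqref{viceversa1}--\eqref{viceversa2} as the geometric statement ``$F_u$ equals the tangential projection $T$ of $E_1$''. Once this equivalence is made precise, both directions follow from the orthogonal decomposition $E_1 = T + \nu N$ together with \eqref{gT}.

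For the \textbf{necessity} direction, I would assume $F(\Omega)$ is a helix surface with constant angle $\nu$ and $F_u = T$. Then the decomposition $E_1 = T + \nu N$, orthogonality $g_{\varepsilon}(T,N)=0$, and \eqref{gT} immediately give
$$
g_{\varepsilon}(F_u,F_u)=g_{\varepsilon}(T,T)=-(1+\lambda\nu^2), \qquad g_{\varepsilon}(E_1,F_u)=g_{\varepsilon}(T,F_u)=g_{\varepsilon}(T,T),
$$
which is \eqref{viceversa1}. For \eqref{viceversa2}, I would write $F_v = aT + bJT$ as in \eqref{eq:local-coordinates} and use $g_{\varepsilon}(JT,T)=0$ together with the fact that $JT = N \wedge T$ is orthogonal to $N$ (hence to $E_1 - T = \nu N$) to conclude that both $g_{\varepsilon}(F_u,F_v)$ and $g_{\varepsilon}(F_v,E_1)$ reduce to $a\,g_{\varepsilon}(T,T)$.

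For the \textbf{sufficiency} direction, I would fix a unit normal $N$ to $F(\Omega)$ with $g_{\varepsilon}(N,N)=\lambda$, set $\tilde\nu := \lambda\,g_{\varepsilon}(N,E_1)$, and decompose $E_1 = \tilde T + \tilde\nu\,N$ with $\tilde T$ tangent. Taking the norm of $E_1$ produces $g_{\varepsilon}(\tilde T,\tilde T) = -(1+\lambda\tilde\nu^2)$. Using $g_{\varepsilon}(E_1,F_u)=g_{\varepsilon}(\tilde T,F_u)$ and similarly with $F_v$, the two hypotheses \eqref{viceversa1}--\eqref{viceversa2} become
$$
g_{\varepsilon}(\tilde T - F_u,F_u)=0=g_{\varepsilon}(\tilde T - F_u,F_v),
$$
so the tangent vector $\tilde T - F_u$ is orthogonal to a basis of the tangent plane. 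Since the induced metric on $F(\Omega)$ is non-degenerate (Riemannian when $\lambda=-1$, Lorentzian when $\lambda=1$), this forces $\tilde T = F_u$. Substituting back and comparing $g_{\varepsilon}(F_u,F_u)=-(1+\lambda\tilde\nu^2)$ with the prescribed $-(1+\lambda\nu^2)$ gives $\tilde\nu^2=\nu^2$; by continuity $\tilde\nu$ is locally constant, and after possibly reversing the sign of $N$ we get $\tilde\nu=\nu$.

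The \textbf{main obstacle} is the step that upgrades the orthogonality of $\tilde T - F_u$ to both $F_u$ and $F_v$ into the equality $\tilde T = F_u$: this depends precisely on the non-degeneracy of the induced metric, which is built into the spacelike/timelike hypothesis on $F$. Everything else is a routine unwinding of the decomposition $E_1 = T + \nu N$; in particular, unlike in Theorem~\ref{position-vector}, no appeal to higher-order derivatives of $F$ is needed.
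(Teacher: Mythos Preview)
Your proof is correct and follows essentially the same route as the paper. The only cosmetic difference is in the sufficiency step: the paper first Gram--Schmidt orthogonalizes $F_v$ against $F_u$ to obtain a vector $\tilde T$, checks $g_\varepsilon(E_1,\tilde T)=0$, and then writes $E_1=c_1F_u+c_2N$; you instead split $E_1=\tilde T+\tilde\nu N$ into tangential and normal parts upfront and use non-degeneracy of the induced metric to force $\tilde T-F_u=0$. Both arguments encode the same linear-algebra fact, and both tacitly assume (as does the statement) that the induced metric is non-degenerate so that a unit normal $N$ with $g_\varepsilon(N,N)=\lambda$ exists.
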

\begin{proof}
Suppose that $F(\Omega)$ is a pseudo-Riemannian helix surface in $\s^3_\varepsilon$ of constant angle function $\nu$. With respect to the local coordinates $(u,v)$ defined in \eqref{eq:local-coordinates} and \eqref{solab}, we have that $F_u=(E_1)^T$ and, also, the equation~\eqref{gT} is fulfilled:
$$
 g_{\varepsilon}(F_u,F_u)=-(1+\lambda\,\nu^2)\,.
$$
In addition, from \eqref{eq-fu-jf-main}, we get
$$
 g_{\varepsilon}(E_1,F_u)=g_{\varepsilon}(F_u+\,\nu\,N,F_u)=g_{\varepsilon}(F_u,F_u)\,.
$$
Therefore, using \eqref{eq:local-coordinates}, we have that
$$
 g_{\varepsilon}(F_v,E_1)=g_{\varepsilon}(F_v,F_u+\,\nu\,N)=g_{\varepsilon}(F_u,F_v).
$$

For the converse, put
$$
\tilde{T}=F_v-\frac{g_{\varepsilon}(F_v,F_u)F_u}{g_{\varepsilon}(F_u,F_u)}\,.
$$
Then, if we denote by $N$ the unit normal vector field to the pseudo-Riemannian surface $F(\Omega)$ (i.e.  $g_{\varepsilon}(N,N)=\lambda$), we have that $\{F_u,\tilde{T},N\}$ is
an orthogonal basis of the tangent space of $\s^3_\varepsilon$ along the surface $F(\Omega)$.
Now, using  \eqref{viceversa1} and \eqref{viceversa2}, we get $g_{\varepsilon}(E_1,\tilde{T})=0$, thus $E_1=c_1\, F_u+c_2\, N$. Moreover,
using \eqref{viceversa1} and that $g_{\varepsilon}(E_1,F_u)=c_1\, g_{\varepsilon}(F_u,F_u)$, we conclude that $c_1=1$ (i.e. $F_u=(E_1)^T$).
Finally,
$$
-1=g_{\varepsilon}(E_1,E_1)=g_{\varepsilon}(F_u+c_2\, N,F_u+c_2\, N)=-(1+\lambda\,\nu^2)+\lambda\,c_ 2^2\,,
$$
which implies that $c_2^2=\nu^2$. 
Consequently,  up to the orientation of $N$, we obtain that
$$
g_{\varepsilon}(E_1,N)=g_{\varepsilon}(F_u+\nu\, N,N)=\lambda\,\nu=\cst
$$
and, thus, $F(\Omega)\subset \s^3_\varepsilon$ defines a pseudo-Riemannian helix surface.
\end{proof}

We are now in the right position to prove the main result of this paper.
\begin{theorem}\label{teo-principal}
Let $M$ be a helix surface in the Lorentzian Berger sphere $\s^3_\varepsilon$, with constant angle function $\nu$. Then, 
locally, 
the position vector of $M$ in $\r^4$, with respect to the local coordinates $(u,v)$ on $M$ defined in \eqref{eq:local-coordinates} and \eqref{solab}, is given by:
$$
F(u,v)=Q(v)\,\beta(u)\,,
$$
where
$$
\beta(u)=(\sqrt{g_{11}}\,\cos (\alpha_1 u),\lambda\,\sqrt{g_{11}}\,\sin (\alpha_1 u),\sqrt{g_{33}}\,\cos (\alpha_2 u),\lambda\,\sqrt{g_{33}}\,\sin (\alpha_2 u))
$$
is a twisted geodesic in the torus $\s^1(\sqrt{g_{11}})\times\s^1(\sqrt{g_{33}})\subset \s^3$, the constants $g_{11}$, $g_{33}$, $\alpha_1$, $\alpha_2$ are given in Corollary~\ref{cor-Fuv}, and $Q(v)$ is a $1$-parameter family of $4\times 4$ orthogonal matrices such that ${J_1}Q(v)=Q(v){J_1}$, with $\xi=\cst$ and
\begin{equation}\label{eq-alpha123}
  \cos^2(\xi_1(v)) \,\xi_{2}'(v)-\sin^2(\xi_1(v))\, \xi_{3}'(v)=0\,.
\end{equation}
Conversely, a parametrization $F(u,v)=Q(v)\,\beta(u)$, with $\beta(u)$ and $Q(v)$ as above, defines
	a helix surface in the Lorentzian Berger sphere $\s^3_\varepsilon$.
\end{theorem}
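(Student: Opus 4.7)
The plan is to prove both implications by leaning on Corollary~\ref{cor-Fuv} for the forward direction and on Proposition~\ref{pro-viceversa} for the converse.

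For the forward implication I would begin from the factorised ansatz of Corollary~\ref{cor-Fuv}. Since $g_{11}=g_{22}$ and $g_{33}=g_{44}$ are constants and the frame $\{g^i(v)\}$ is mutually orthogonal, the vectors
$$Q_1(v):=\tfrac{g^1(v)}{\sqrt{g_{11}}},\quad Q_2(v):=\tfrac{\lambda\,g^2(v)}{\sqrt{g_{11}}},\quad Q_3(v):=\tfrac{g^3(v)}{\sqrt{g_{33}}},\quad Q_4(v):=\tfrac{\lambda\,g^4(v)}{\sqrt{g_{33}}}$$
form an orthonormal basis of $\r^4$ for each $v$, and stacking them as the columns of a matrix $Q(v)\in\mathrm{O}(4)$ yields $F(u,v)=Q(v)\,\beta(u)$ with $\beta$ exactly as in the statement. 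By inspection $\beta(u)$ lies in the Clifford-type torus $\s^1(\sqrt{g_{11}})\times\s^1(\sqrt{g_{33}})\subset\s^3$ and is traced at constant angular speeds $\alpha_1,\alpha_2$, making it a twisted geodesic of the torus. To establish the commutation $Q(v)J_1=J_1Q(v)$, I would use $J_1F=X_1\circ F=\varepsilon(F_u+\nu N)$ together with the identities~\eqref{eq-fu-jf-main}; evaluating them at $u=0$ and at successive $u$-derivatives isolates the coefficients of $\cos(\alpha_i u),\sin(\alpha_i u)$ and forces $J_1g^1(v)\parallel g^2(v)$ and $J_1g^3(v)\parallel g^4(v)$ with proportionalities dictated by $\lambda,\alpha_i,g_{ii}$, i.e.\ $J_1Q_1=Q_2$ and $J_1Q_3=Q_4$, which is precisely $J_1Q(v)=Q(v)J_1$. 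Consequently $Q(v)$ admits the Montaldo--Onnis parametrisation~\eqref{eq-descrizione-A} (with the plus sign throughout) by four functions $\xi,\xi_1,\xi_2,\xi_3$ of $v$.

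To pin down $\xi=\cst$ and~\eqref{eq-alpha123}, I would exploit the fact that $(u,v)$ were chosen in \eqref{eq:local-coordinates}--\eqref{solab} so that $\partial_v=a(u,v)T+b(u,v)JT$ with explicit $a,b$ from \eqref{solab}. Computing $F_v=Q'(v)\beta(u)$ and projecting it onto the tangent plane of $M$, the $u$-dependence splits in the trigonometric basis $\{\cos(\alpha_i u),\sin(\alpha_i u)\}$; matching coefficients after substituting~\eqref{eq-descrizione-A} should produce one equation of the form $\xi'(v)\cdot(\text{non-vanishing})=0$, forcing $\xi=\cst$, and a second $u$-independent equation simplifying exactly to~\eqref{eq-alpha123}.

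For the converse I would verify the two hypotheses of Proposition~\ref{pro-viceversa}. Orthogonality of $Q(v)$ immediately gives $\langle F,F\rangle=\|\beta(u)\|^2=g_{11}+g_{33}=1$ by Remark~\ref{g13}, so $F(\Omega)\subset\s^3$. Using $J_1Q(v)=Q(v)J_1$, the Euclidean inner products $\langle F_u,F_u\rangle$ and $\langle J_1F,F_u\rangle$ reduce to computations in the $\beta,\beta',J_1\beta$ variables and yield~\eqref{viceversa1}. Finally, the difference $g_\varepsilon(F_u,F_v)-g_\varepsilon(F_v,E_1)$ expands to a trigonometric polynomial in $u$ whose vanishing for all $u$ is equivalent to the combination of $\xi'(v)=0$ and~\eqref{eq-alpha123}, giving~\eqref{viceversa2} and closing the equivalence. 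The principal obstacle is the middle step of the forward direction: one must translate the Hermitian-type identities~\eqref{eq-fu-jf-main} into rigid pointwise relations between $g^i(v)$ and $J_1g^i(v)$, and then carefully match the coefficients of $\{\cos(\alpha_i u),\sin(\alpha_i u)\}$ after substituting the explicit isometry family~\eqref{eq-descrizione-A} into $F_v$, so as to isolate the two scalar conditions $\xi=\cst$ and~\eqref{eq-alpha123} from the tangential decomposition $\partial_v=aT+bJT$.
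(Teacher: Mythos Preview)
Your overall strategy --- extracting $Q(v)\in\mathrm{O}(4)$ from the orthogonal frame $\{g^i(v)\}$ of Corollary~\ref{cor-Fuv}, proving $J_1Q=QJ_1$ via the relations~\eqref{eq-fu-jf-main}, and handling the converse through Proposition~\ref{pro-viceversa} --- coincides with the paper's proof. There is, however, a genuine gap in your derivation of $\xi=\cst$.

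You assert that coefficient matching ``should produce one equation of the form $\xi'(v)\cdot(\text{non-vanishing})=0$, forcing $\xi=\cst$''. In the actual computation this is not what one finds. The paper uses the simpler observation that $\langle F_v,F_v\rangle=\lambda+\nu^2$ is constant; differentiating in $u$ at $u=0$ gives $\langle\mathbf q_2',\mathbf q_3'\rangle=\langle\mathbf q_2',\mathbf q_4'\rangle=0$ for the columns $\mathbf q_i$ of $Q$, and after substituting the parametrisation~\eqref{eq-descrizione-A} one obtains two relations $\xi'\,h(v)=0$, $\xi'\,k(v)=0$ with
\[
h^2+k^2 \;=\; 4(\xi_1')^2+\sin^2(2\xi_1)\,\bigl(-\xi'+\xi_2'+\xi_3'\bigr)^2.
\]
This factor \emph{can} vanish (e.g.\ $\xi_1=\cst$ together with $-\xi'+\xi_2'+\xi_3'=0$), so $\xi'=0$ does not follow automatically. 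The paper closes this hole by a separate geometric argument: in the degenerate case one writes out the unit normal $N$ to $F(u,v)=Q(v)\beta(u)$ and checks that its $E_1$-component $N_1$ is identically zero, so $E_1$ is tangent to the surface and $M$ is a timelike Hopf tube --- contradicting the standing hypothesis $\nu\neq 0$. Without this dichotomy-and-exclusion step your argument for $\xi=\cst$ is incomplete. Your treatment of~\eqref{eq-alpha123} and of the converse, on the other hand, is essentially the paper's: once $\xi=\cst$, condition~\eqref{viceversa2} of Proposition~\ref{pro-viceversa} reduces precisely to~\eqref{eq-alpha123}.
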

\begin{proof}
With respect to the local coordinates $(u,v)$ on $M$ defined in \eqref{eq:local-coordinates} and \eqref{solab},  the position vector of the  helix  surface in $\r^4$ is given by
$$
F(u,v)=\cos(\alpha_1 u)\,g^1(v)+\sin(\alpha_1 u)\,g^2(v)+\cos(\alpha_2 u)\,g^3(v)+\sin(\alpha_2 u)\,g^4(v)\,,
$$
where (see Corollary~\ref{cor-Fuv}) the vector fields  $\{g^i(v)\}$ are mutually orthogonal  and 
$$
||g^1(v)||=||g^2(v)||=\sqrt{g_{11}}=\text{constant}\,,
$$
$$
||g^3(v)||=||g^4(v)||=\sqrt{g_{33}}=\text{constant}\,.
$$
Putting $e_i(v):=g^i(v)/||g^i(v)||$, $i\in\{1,\dots,4\}$, we can write:
\begin{eqnarray}\label{eq:Fei}
F(u,v)=&\sqrt{g_{11}}\,(\cos (\alpha_1\,u)\,e_1(v)+\sin(\alpha_1\,u)\,e_2(v))\nonumber\\
&+\sqrt{g_{33}}\,(\cos (\alpha_2\,u)\,e_3(v)+\sin(\alpha_2\,u)\,e_4(v))\,.
\end{eqnarray}\\
Now, we will prove that 
$\bar{J}=\lambda\,(J_1)^{T}$, where $\bar{J}$ is the matrix with entries given by
$\bar{J}_{i,j}=\langle {J_1} e_i,e_j\rangle$, $i,j=1,\ldots,4$. 
Evaluating \eqref{eq-fu-jf-main} in  $(0,v)$, we get respectively:
\begin{equation}\label{eq1bis}  
    \alpha_1 g_{11}\langle{J_1}e_1,e_2\rangle+\alpha_2\,g_{33}\langle{J_1}e_3,e_4\rangle+\sqrt{g_{11}g_{33}}\,(\alpha_1\langle{J_1}e_3,e_2\rangle+\alpha_2\langle{J_1}e_1,e_4\rangle)
    =\varepsilon^{-1}\,(1+\lambda\,\nu^2),  
\end{equation}
\begin{equation}
     \langle{J_1}e_1,e_3\rangle=0\,,
\end{equation}
\begin{equation}\label{39}
\alpha_1^3\,g_{11}\langle{J_1}e_1,e_2\rangle+\alpha_2^3\,g_{33}\langle{J_1}e_3,e_4\rangle+\sqrt{g_{11}g_{33}}\,(\alpha_1\alpha_2^2\langle{J_1}e_3,e_2\rangle+\alpha_1^2\alpha_2\langle{J_1}e_1,e_4\rangle)=-I,
\end{equation}
\begin{equation}
    \langle{J_1}e_2,e_4\rangle=0\,,
\end{equation}
\begin{equation}\label{eq2bis}
    \alpha_1\langle{J_1}e_2,e_3\rangle+\alpha_2\langle{J_1}e_1,e_4\rangle=0\,,
\end{equation}
\begin{equation}\label{eq3bis}
    \alpha_2\langle{J_1} e_2,e_3\rangle+\alpha_1\langle{J_1}e_1,e_4\rangle=0\,.
\end{equation}
Note that to obtain the previous identities we have divided by $\alpha_1^2-\alpha_2^2=4 \varepsilon^{-1}\, \sqrt{\lambda\,\nu^2\,B^3 }$ which is, by the assumption on $\nu$, always different from zero.
From \eqref{eq2bis} and \eqref{eq3bis}, taking into account that $\alpha_1^2-\alpha_2^2\neq 0$, it results that
\begin{equation}\label{eq4bis}
     \langle{J_1}e_3,e_2\rangle=0\,,\qquad \langle{J_1}e_1,e_4\rangle=0\,.
\end{equation}
Consequently,
$$
|\langle{J_1}e_1,e_2\rangle|=1=|\langle{J_1}e_3,e_4\rangle|.
$$
Substituting \eqref{eq4bis} in \eqref{eq1bis} and \eqref{39}, we have the system
$$
\left\{\begin{aligned}
&\alpha_1 g_{11}\langle{J_1}e_1,e_2\rangle+\alpha_2\,g_{33}\langle{J_1}e_3,e_4\rangle=\varepsilon^{-1}\,(1+\lambda\,\nu^2),\\
 & \alpha_1^3 g_{11}\langle{J_1}e_1,e_2\rangle+\alpha_2^3\,g_{33}\langle{J_1}e_3,e_4\rangle=-I\,,
\end{aligned}
\right.
$$
a solution of which is
$$
\langle{J_1}e_1,e_2\rangle=\frac{\varepsilon\, I+\alpha_2^2\,(1+\lambda\,\nu^2)}{\varepsilon\, g_{11}\,\alpha_1 (\alpha_2^2-\alpha_1^2)}\,,\qquad \langle{J_1}e_3,e_4\rangle=-\frac{\varepsilon I+\alpha_1^2\,(1+\lambda\,\nu^2)}{\varepsilon\, g_{33}\,\alpha_2(\alpha_2^2-\alpha_1^2)}\,.
$$ 
Besides, since
$$
g_{11}\,g_{33}=\frac{1+\lambda\,\nu^2}{4B}\,,\qquad \alpha_1\,\alpha_2=\frac{B}{\varepsilon^2}\,(1+\lambda\,\nu^2)\,,
$$
we get
$
\langle{J_1}e_1,e_2\rangle\langle{J_1}e_3,e_4\rangle=1\,.
$
Moreover,  it's easy to check that 
$\langle {J_1}e_1,e_2\rangle=\lambda$.
Consequently,
$\langle {J_1}e_3,e_4\rangle=\lambda$ and we have proved that $\bar{J}=\lambda\,({J_1})^{T}$. \\

Then, if we fix the orthonormal basis  of $\r^4$ given by 
$$
\tilde{E_1}=(1,0,0,0)\,,\quad \tilde{E_2}=(0,\lambda,0,0)\,,\quad \tilde{E_3}=(0,0,1,0)\,,\quad\tilde{ E_4}=(0,0,0,\lambda)\,,
$$ 
there must exists a $1$-parameter family of orthogonal matrices $Q(v)\in \mathrm{O}(4)$, with ${J_1}Q(v)=Q(v){J_1}$, such that $e_i(v)=Q(v)\tilde{E_i}$.  So, from \eqref{eq:Fei} we have
$$
F(u,v)=Q(v)\beta(u)\,,
$$ 
where the curve 
$$
\beta(u)=(\sqrt{g_{11}}\,\cos (\alpha_1 u),\lambda\,\sqrt{g_{11}}\,\sin (\alpha_1 u),\sqrt{g_{33}}\,\cos (\alpha_2 u),\lambda\,\sqrt{g_{33}}\,\sin (\alpha_2 u))\,,
$$
is a twisted geodesic of the torus $\s^1(\sqrt{g_{11}})\times\s^1(\sqrt{g_{33}})$, which is contained in the sphere  $\s^3$ (see Remark~\ref{g13}).\\

Now, we consider the description of the $1$-parameter family $Q(v)$ given in \eqref{eq-descrizione-A} (see~\cite{MO}), that makes use of the four functions $\xi_1(v),\xi_2(v),\xi_3(v)$ and $\xi(v)$. From \eqref{eq:local-coordinates} and \eqref{solab}, it results that $\langle F_v, F_v\rangle=\lambda+\nu^2$ and, therefore,
\begin{equation}\label{eq-fv-fv-sin-theta-d-u}
\frac{\partial}{\partial u}\langle F_v, F_v\rangle_{| u=0}=0\,.
\end{equation}

Moreover, if we denote by ${\mathbf q_1},{\mathbf q_2},{\mathbf q_3},{\mathbf q_4}$ the colons of $Q(v)$, equation \eqref{eq-fv-fv-sin-theta-d-u} implies that
\begin{equation}\label{sistem-c23-c24}
\langle {\mathbf q_2}',{\mathbf q_3}'\rangle=0,\qquad
\langle {\mathbf q_2}',{\mathbf q_4}'\rangle=0\,,
\end{equation}
where $'$ denotes the derivative with respect to $v$.
Substituting in \eqref{sistem-c23-c24} the expressions of the ${\mathbf q_i}$'s as functions of $\xi_1(v),\xi_2(v),\xi_3(v)$ and $\xi(v)$, we obtain
$$
\begin{cases}
\xi'\, h(v)=0,\\
\xi'\, k(v)=0,
\end{cases}
$$
where $h(v)$ and $k(v)$ are two functions such that
$$
h^2+k^2=4 (\xi_1')^2+\sin^2(2\xi_1)\, (-\xi'+\xi_2'+\xi_3')^2\,.
$$
Consequently, we have two possibilities:
\begin{itemize}
\item[(i)] $\xi=\cst$;
\item[] or
\item[(ii)] $4 (\xi_1')^2+\sin^2(2\xi_1)\, (-\xi'+\xi_2'+\xi_3')^2=0$.
\end{itemize}

We will show that case (ii) cannot occurs, more precisely we will show that if (ii) happens  than the parametrization $F(u,v)=Q(v)\beta(u)$ defines a timelike Hopf tube, that is the vector field $E_1$ is tangent to the surface. To this end,  we write the unit normal vector field $N$ to the parametrization
$F(u,v)=Q(v) \beta(u)$ as:
$$
N=\frac{N_1 E_1+N_2E_2+N_3 E_3}{\sqrt{|-N_1^2+N_2^2+N_3^2|}}\,,
$$
where
$$
\begin{cases}
N_1=g_{\varepsilon}(F_u,E_3)g_{\varepsilon}(F_v,E_2)-g_{\varepsilon}(F_u,E_2)g_{\varepsilon}(F_v,E_3),\\
N_2=g_{\varepsilon}(F_u,E_1)g_{\varepsilon}(F_v,E_3)-g_{\varepsilon}(F_u,E_3)g_{\varepsilon}(F_v,E_1),\\
N_3=g_{\varepsilon}(F_u,E_2)g_{\varepsilon}(F_v,E_1)-g_{\varepsilon}(F_u,E_1)g_{\varepsilon}(F_v,E_2)\,.
\end{cases}
$$

Now case (ii) occurs if and only if $\xi_1=\cst=0$, or if $\xi_1=\cst\neq 0$ and $-\xi'+\xi_2'+\xi_3'=0$. In both cases $N_1=0$ and this implies that $g_{\varepsilon}(N,J_1F)=\varepsilon\,g_{\varepsilon}(N,E_1)=0$, i.e. the timelike Hopf vector field $E_1$ is tangent to the surface, which is a timelike Hopf tube. Thus, we have proved that $\xi=\cst$. 
Finally, in this case, \eqref{viceversa2} is equivalent to
$$
\lambda\,\nu\,\sqrt{\lambda\,B}\,[\cos^2(\xi_1(v)) \,\xi_{2}'(v)-\sin^2(\xi_1(v))\, \xi_{3}'(v)]=0
$$
and, as $\nu\neq 0$, we conclude that the  condition~\eqref{eq-alpha123} is satisfied.\\

The converse follows immediately from Proposition~\ref{pro-viceversa} since a direct
calculation shows that $$g_{\varepsilon}(F_u,F_u)=g_{\varepsilon}(E_1,F_u)=-(1+\lambda\,\nu^2)$$ which is \eqref{viceversa1},
while \eqref{eq-alpha123} is equivalent to \eqref{viceversa2}.
\end{proof}

\begin{corollary}\label{cor-2}
Let $M$ be a helix spacelike (respectively, timelike) surface in the Lorentzian Berger sphere $\s^3_\varepsilon$ with constant angle function $\nu$. Then, there exist local coordinates on $M$ such that the position vector of $M$ in $\r^4$  is
$$
F(s,v)=Q(v)\,\beta(s)\,,
$$
where
\begin{equation}\label{ppca}
\beta(s)=\frac{1}{\sqrt{1+d^2}}\Big(d\,\cos \Big(\frac{s}{d}\Big),\lambda\,d\,\sin \Big(\frac{s}{d}\Big),\cos (d\,s),\lambda\,\sin (d\,s)\Big)
\end{equation}
is a twisted geodesic in the torus $\s^1(\frac{d}{\sqrt{1+d^2}})\times\s^1(\frac{1}{\sqrt{1+d^2}})\subset \s^3$ parametrized by arc length, whose slope is given by:
$$
d=\frac{\sqrt{\lambda\,B}-\varepsilon\,|\nu|}{\sqrt{\lambda+\nu^2}}\in (0,1),
$$
where $\lambda=-1$ (respectively, $\lambda=1$). In addition, $Q(v)=Q(\xi,\xi_1,\xi_2,\xi_3)(v)$ is a $1$-parameter family of $4\times 4$ orthogonal matrices commuting with $J_1$, as described in \eqref{eq-descrizione-A}, with $\xi=\cst$ and
$$
  \cos^2(\xi_1(v)) \,\xi_{2}'(v)-\sin^2(\xi_1(v))\, \xi_{3}'(v)=0\,.
$$
Conversely, a parametrization $F(s,v)=Q(v)\,\beta(s)$, with $\beta(s)$ and $Q(v)$ as above, defines
 a helix surface in the Lorentzian Berger sphere $\s^3_\varepsilon$.
\end{corollary}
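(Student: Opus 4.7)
The strategy is to obtain Corollary~\ref{cor-2} as a direct reparametrization of Theorem~\ref{teo-principal}: replace the parameter $u$ by an arc–length parameter $s$ along $\beta$, and then express everything in terms of the single number $d$.

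First I would compute the speed of the curve
$$\beta(u)=\big(\sqrt{g_{11}}\cos(\alpha_1 u),\,\lambda\sqrt{g_{11}}\sin(\alpha_1 u),\,\sqrt{g_{33}}\cos(\alpha_2 u),\,\lambda\sqrt{g_{33}}\sin(\alpha_2 u)\big)$$
appearing in Theorem~\ref{teo-principal}. A direct calculation gives $|\beta'(u)|^{2}=\alpha_1^{2}g_{11}+\alpha_2^{2}g_{33}$, and I would simplify this using the expressions $g_{11}=\lambda\varepsilon\alpha_2/(2B)$, $g_{33}=\lambda\varepsilon\alpha_1/(2B)$ from Corollary~\ref{cor-Fuv} together with the identities
$$
\alpha_{1}+\alpha_{2}=\frac{2\lambda B}{\varepsilon}\,,\qquad \alpha_{1}\alpha_{2}=\frac{\lambda B(\lambda+\nu^{2})}{\varepsilon^{2}}\,,
$$
obtained from the definition of $\alpha_{1,2}$ and from $\lambda B-\varepsilon^{2}\nu^{2}=\lambda+\nu^{2}$. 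This yields a constant speed, so $s:=d\,\alpha_{1}u$ (with $d$ to be determined) is an arc–length parameter.

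Next I would choose the slope $d=\sqrt{\alpha_{2}/\alpha_{1}}$, which is well–defined and strictly in $(0,1)$ since $0<\alpha_{2}<\alpha_{1}$ (both positive because $\lambda+\nu^{2}>0$ in both the spacelike and timelike cases, by Remark~\ref{value-B} and the standing hypotheses on $\nu$). With this choice, $\alpha_{1}u=s/d$ and $\alpha_{2}u=d\,s$, while
$$
\sqrt{g_{11}}=\sqrt{\frac{\alpha_{2}}{\alpha_{1}+\alpha_{2}}}=\frac{d}{\sqrt{1+d^{2}}}\,,\qquad \sqrt{g_{33}}=\sqrt{\frac{\alpha_{1}}{\alpha_{1}+\alpha_{2}}}=\frac{1}{\sqrt{1+d^{2}}}\,.
$$
Substituting these into $\beta(u)$ produces exactly the expression~\eqref{ppca}. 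To rewrite $d$ in the closed form stated in the corollary, I would use the factorization $\alpha_{2}/\alpha_{1}=(\sqrt{\lambda B}-\varepsilon|\nu|)/(\sqrt{\lambda B}+\varepsilon|\nu|)$ and rationalize the denominator by $\sqrt{\lambda B}+\varepsilon|\nu|$, invoking once more the identity $(\sqrt{\lambda B}-\varepsilon|\nu|)(\sqrt{\lambda B}+\varepsilon|\nu|)=\lambda+\nu^{2}$ to obtain
$$
d=\frac{\sqrt{\lambda B}-\varepsilon|\nu|}{\sqrt{\lambda+\nu^{2}}}\in(0,1)\,.
$$
The curve $\beta(s)$ then lies on the torus $\s^{1}(d/\sqrt{1+d^{2}})\times\s^{1}(1/\sqrt{1+d^{2}})\subset\s^{3}$ and, being arc–length parametrized with constant ratio of angular speeds, is a twisted geodesic of this torus. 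The statement about $Q(v)$ is unchanged from Theorem~\ref{teo-principal}, so nothing extra is required there.

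For the converse, I would simply note that starting from a parametrization $F(s,v)=Q(v)\beta(s)$ of the prescribed form, the change of variable $s=d\alpha_{1}u$ puts it into the shape of Theorem~\ref{teo-principal}, so the converse direction there immediately provides a helix surface in $\s^{3}_{\varepsilon}$; alternatively, one can verify conditions~\eqref{viceversa1} and~\eqref{viceversa2} of Proposition~\ref{pro-viceversa} directly, using that $\beta$ is arc–length parametrized and that $J_{1}$ commutes with $Q(v)$. I do not foresee a genuine obstacle: the entire argument is a book–keeping exercise based on the algebraic identities connecting $\alpha_{1},\alpha_{2},g_{11},g_{33},B$ and $\nu$, and the only care needed is to check the sign conditions that guarantee $d>0$ and $d<1$.
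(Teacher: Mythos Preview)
Your proposal is correct and follows essentially the same approach as the paper: the paper also starts from the curve $\beta(u)$ of Theorem~\ref{teo-principal}, computes $\langle\beta'(u),\beta'(u)\rangle=\alpha_1\alpha_2$, sets $d=\sqrt{\alpha_2/\alpha_1}$, derives $g_{11}=d^2/(1+d^2)$ and $g_{33}=1/(1+d^2)$ via Corollary~\ref{cor-Fuv} and Remark~\ref{g13}, and then passes to the arc-length parameter. Your write-up is in fact more explicit than the paper's in verifying the speed computation and the closed form for $d$, but the underlying idea is identical.
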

\begin{proof}
We consider the curve $\beta(u)$ given in the Theorem~\ref{teo-principal}. 
Since $\langle\beta'(u),\beta'(u)\rangle=\alpha_1\,\alpha_2$, considering $$d:=\sqrt{\frac{\alpha_2}{\alpha_1}}=\frac{\sqrt{\lambda\,B}-\varepsilon\,|\nu|}{\sqrt{\lambda+\nu^2}},$$ from the equations~\eqref{G11}, \eqref{G33} and, also, taking into account the Remark~\ref{g13}, we get
$$g_{11}=\frac{d^2}{1+d^2},\qquad g_{33}=\frac{1}{1+d^2}$$ and, also,  we observe that $0<d<1$. Therefore,
we can consider the  arc length reparameterization  of the curve $\beta$ given by:
$$
\beta(s)=\frac{1}{\sqrt{1+d^2}}\Big(d\,\cos \Big(\frac{s}{d}\Big),\lambda\,d\,\sin \Big(\frac{s}{d}\Big),\cos (d\,s),\lambda\,\sin (d\,s)\Big).
$$
Finally, we observe that $d$ represents the slope of the geodesic $\beta$.
\end{proof}
\begin{remark}
The curve $\beta:\r\to\s^3$ parametrized by~\eqref{ppca} is a spherical helix in $\s^3$ with constant geodesic curvature and torsion given by: $$\kappa_g=\frac{1-d^2}{d}=\frac{2\varepsilon\,|\nu|}{\sqrt{\lambda+\nu^2}},\qquad |\tau_g|=1.$$ 
\end{remark}
\begin{proposition}
The curve $\beta:\r\to\s_{\varepsilon}^3$ parametrized by~\eqref{ppca}, that is used in the Corollary~\ref{cor-2} to characterize a constant angle spacelike (respectively, timelike) surface $M$,  is a spacelike (respectively, timelike)  general helix\footnote{A non-null curve 
$\beta$ in a Lorentzian manifold $(N,h)$ is called a {\it general helix} if there exists a Killing vector
field $V$ with constant length along $\beta$ and such that the angle function between $V$ and $\beta'$ (i.e. $h(\beta',V)/\|\beta'\|_h$)
is a non-zero constant along $\beta$. We say that $V$ is an axis of the general helix $\beta$.} in $\s_\varepsilon^3$ with axis $E_1$, i.e. it has constant angle with the fibers of the Hopf fibration.
\end{proposition}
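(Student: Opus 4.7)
The plan is to verify directly the three conditions in the footnote's definition of general helix, taking the candidate axis to be $V = E_1$. Two of them are free: $E_1$ is a Killing vector field by construction, and it has constant length along $\beta$ (in fact globally), since $\g(E_1,E_1) = -1$ everywhere on $\s^3_\varepsilon$. Thus the whole proof reduces to showing that $\g(\beta'(s),\beta'(s))$ and $\g(\beta'(s),E_1|_{\beta(s)})$ are \emph{nonzero constants} in $s$, with the correct sign of $\g(\beta',\beta')$ to match the stated causal character.

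The crucial observation is that at every point $p\in\s^3$ one has
$$
E_1|_p \;=\; \varepsilon^{-1} X_1|_p \;=\; \varepsilon^{-1} J_1\, p,
$$
because $X_1(z,w) = (iz,iw)$ acts on $\r^4$ exactly as the matrix $J_1$ of \S 2. Hence $E_1|_{\beta(s)} = \varepsilon^{-1} J_1\beta(s)$, and everything will follow from a single Euclidean inner product. Differentiating~\eqref{ppca} term by term, the cross-products between the two circular factors cancel and I obtain
$$
\langle \beta'(s), J_1\beta(s)\rangle \;=\; \frac{2\lambda d}{1+d^2},\qquad
\langle \beta'(s),\beta'(s)\rangle \;=\; 1,
$$
both non-zero constants; the second one confirms the arc-length parametrization on $\s^3$ asserted in Corollary~\ref{cor-2}.

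Next, plugging these into the defining formula $\g(X,Y) = \langle X,Y\rangle - (\varepsilon^2+1)\langle X,X_1\rangle\langle Y,X_1\rangle$ and using $\langle J_1\beta,J_1\beta\rangle = |\beta|^2 = 1$, I obtain
$$
\g\bigl(\beta'(s), E_1|_{\beta(s)}\bigr) \;=\; -\varepsilon\,\langle\beta'(s), J_1\beta(s)\rangle \;=\; -\frac{2\lambda d\,\varepsilon}{1+d^2},
$$
$$
\g(\beta'(s),\beta'(s)) \;=\; 1-(\varepsilon^2+1)\,\frac{4d^2}{(1+d^2)^2}.
$$
Both quantities are manifestly independent of $s$, so the angle function $\g(\beta',E_1)/\|\beta'\|_{\g}$ is a non-zero constant along $\beta$, which is precisely the definition of a general helix with axis $E_1$.

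For the causal character, I would further simplify $\g(\beta',\beta')$: from $g_{11} = d^2/(1+d^2)$ and $g_{33} = 1/(1+d^2)$, together with the value $g_{11}g_{33} = (1+\lambda\nu^2)/(4B)$ that comes out of Corollary~\ref{cor-Fuv}, one gets $4d^2/(1+d^2)^2 = (1+\lambda\nu^2)/B$, and the elementary identity $B-(\varepsilon^2+1)(1+\lambda\nu^2) = -\varepsilon^2$ then collapses the above to $\g(\beta',\beta') = -\varepsilon^2/B$. By Remark~\ref{value-B} one has $\lambda B > 0$, so $\g(\beta',\beta')$ has the sign of $-\lambda$, confirming that $\beta$ is spacelike exactly when $\lambda = -1$ and timelike exactly when $\lambda = 1$. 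I do not anticipate any genuine obstacle: once the identification $E_1|_\beta = \varepsilon^{-1}J_1\beta$ is in hand, everything reduces to elementary trigonometric and algebraic manipulations.
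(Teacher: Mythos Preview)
Your proof is correct and follows essentially the same route as the paper's: both compute $\langle\beta',J_1\beta\rangle$ and feed it into the definition of $g_\varepsilon$ to obtain $g_\varepsilon(\beta',\beta')=-\varepsilon^2/B$ and a constant nonzero value for $g_\varepsilon(\beta',E_1)$. The only cosmetic difference is that the paper also records the explicit angle value $g_\varepsilon(\beta',E_1)/\|\beta'\|_\varepsilon=-\lambda\sqrt{\lambda+\nu^2}$, which it then reuses in the subsequent corollary relating the hyperbolic angles; your argument in terms of $d$ is equivalent and suffices for the proposition as stated.
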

\begin{proof}
Firstly, we observe that, as $\beta:\r\to\r^4$ is parametrized by arc length, then  $$g_\varepsilon(\beta',\beta')=1-(1+\varepsilon^2)\langle\beta',J_1\beta\rangle^2=-\frac{\varepsilon^2}{B},$$ where the constant $B$ is negative (respectively, positive) if $M$ spacelike (respectively, timelike). Therefore, it results that $\beta:\r\to\s_{\varepsilon}^3$ is a spacelike (respectively, timelike) curve. Moreover, since $$\|\beta'\|_{\varepsilon}=\varepsilon\,\sqrt{\frac{\lambda}{B}},\qquad g_\varepsilon(\beta',E_1)=-\varepsilon\frac{\sqrt{\lambda\,B\,(\lambda+\nu^2)}}{B},$$ then the angle function between $\beta'$ and the Hopf vector field, given by
\begin{equation}\label{beta1}
\frac{g_\varepsilon(\beta',E_1)}{\|\beta'\|_{\varepsilon}}=-\lambda\,\sqrt{\lambda+\nu^2},
\end{equation} is constant. So, the curve $\beta$ is a general helix in $\s_{\varepsilon}^3$.
\end{proof}

\begin{corollary}
Let $M$ be a helix surface in the Lorentzian Berger sphere $\s^3_\varepsilon$, parametrized by $F(s,v)=Q(v)\beta(s)$. Then, the hyperbolic angle between its normal vector field $N$ and $E_1$ is the same that the general helix $\beta$ makes with its axis $E_1$.
\end{corollary}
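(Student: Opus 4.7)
The plan is to compute the two hyperbolic angles directly from the data already available and verify that they coincide.

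First I will compute the hyperbolic angle $\theta$ between $N$ and $E_1$. The helix condition gives $g_\varepsilon(N,E_1)=\lambda\nu$ together with $g_\varepsilon(N,N)=\lambda$, while $E_1$ is a unit timelike vector. In the spacelike case ($\lambda=-1$) both $N$ and $E_1$ are unit timelike, so, after reorienting $N$ so that $\nu>0$, the hyperbolic angle satisfies $\cosh\theta=-g_\varepsilon(N,E_1)=\nu$. In the timelike case ($\lambda=1$), $N$ is unit spacelike and $E_1$ is unit timelike, and the corresponding boost angle satisfies $\sinh\theta=|g_\varepsilon(N,E_1)|=|\nu|$, so $\cosh\theta=\sqrt{1+\nu^2}$.

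Next I will compute the hyperbolic angle $\phi$ between $\beta'$ and $E_1$, using the data from the proposition proved just above: namely $\|\beta'\|_\varepsilon=\varepsilon\sqrt{\lambda/B}$ and equation~\eqref{beta1}, which gives $g_\varepsilon(\beta',E_1)/\|\beta'\|_\varepsilon=-\lambda\sqrt{\lambda+\nu^2}$. In the spacelike case $\beta'$ is spacelike and $E_1$ is timelike, so $\sinh\phi=\sqrt{\nu^2-1}$ (this square root is real since $|\nu|>1$ is forced for spacelike helix surfaces), giving $\cosh\phi=|\nu|=\cosh\theta$. In the timelike case both $\beta'$ and $E_1$ are unit timelike, so $\cosh\phi=\sqrt{1+\nu^2}=\cosh\theta$. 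Either way $\theta=\phi$, as required.

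The only real care needed is the signature bookkeeping: the relation between $g_\varepsilon$ and the hyperbolic angle changes according to the causal types of the two vectors involved, and one must invoke $|\nu|>1$ in the spacelike case to keep $\sqrt{\nu^2-1}$ real. Once these signs and formulas are pinned down, the identity $\theta=\phi$ drops out by direct substitution; no curvature or connection computation is needed beyond what is already encoded in~\eqref{beta1} and the helix surface definition.
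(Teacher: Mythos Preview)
Your proof is correct and follows essentially the same route as the paper: both arguments split into the spacelike and timelike cases, read off the hyperbolic angle between $N$ and $E_1$ from the constant $\nu$, and then compare with the angle function \eqref{beta1} for $\beta'$ to conclude. The only cosmetic difference is that you normalize both comparisons through $\cosh$, whereas the paper matches $\sinh\vartheta$ in the spacelike case and $\cosh\vartheta$ in the timelike case; the content is identical.
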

\begin{proof}
Let $M$ be a spacelike surface  in $\s^3_\varepsilon$, with constant angle function $\nu$. Then, there exists a unique $\vartheta\geq 0$ (up to the orientation of $N$) such that $\nu=\cosh\vartheta$, where $\vartheta$ is called the {\it hyperbolic angle} between $N$ and $E_1$. We remember that, in this case, $\vartheta\neq 0$ since the horizontal distribution of the Hopf map
is not integrable. The conclusion follows from $\eqref{beta1}$,  observing that $-\lambda\,\sqrt{\lambda+\nu^2}=\sinh\vartheta$ and that $\beta'$ is a spacelike vector field.\\

If $M$ is a helix timelike surface, then the hyperbolic angle satisfies $\nu=\sinh\vartheta$, where $\vartheta\neq 0$ since we are not considering Hopf tubes. Consequently, $-\lambda\,\sqrt{\lambda+\nu^2}=-\cosh\vartheta$ and from \eqref{beta1}, up to the orientation of the general helix $\beta$,  we conclude that $\vartheta> 0$ is the hyperbolic angle that it makes with the Hopf vector field $E_1$.
\end{proof}

In the following, we will construct some explicit examples of helix surfaces
   in $\s^3_\varepsilon$.
\begin{example}
Taking
$$
\xi=\pi/2\,,\qquad \xi_1=\pi/4\,,\qquad \xi_2(v)=\xi_3(v)\,,
$$
 from \eqref{eq-descrizione-A} we obtain the  following $1$-parameter family of matrices $Q(v)$ that satisfies the conditions of the Corollary~\ref{cor-2}:
$$
Q(v)=\frac{1}{\sqrt{2}}\begin{pmatrix}
  \cos \xi_2(v) & -\sin \xi_2(v) & \cos \xi_2(v) & -\sin \xi_2(v) \\
 \sin \xi_2(v)& \cos \xi_2(v) & \sin \xi_2(v) & \cos \xi_2(v) \\
 -\cos \xi_2(v) & -\sin \xi_2(v) & \cos \xi_2(v) & \sin \xi_2(v) \\
 \sin\xi_2(v) & -\cos \xi_2(v) & -\sin \xi_2(v) & \cos \xi_2(v)
\end{pmatrix}\,.
$$

In the Figure~\ref{scelta1}
we have plotted the stereographic projection in $\r^3$ of surfaces, with constant angle function $\nu=4$, parametrized by $F(s,v)=Q(v)\,\beta(s)$, with $s\in (-4\pi,4\pi)$ and $v\in (-2\pi,2\pi)$, in the case $\xi_2(v)=v$.\\

\begin{figure}[h]
\begin{center}
\includegraphics[width=0.22\linewidth]{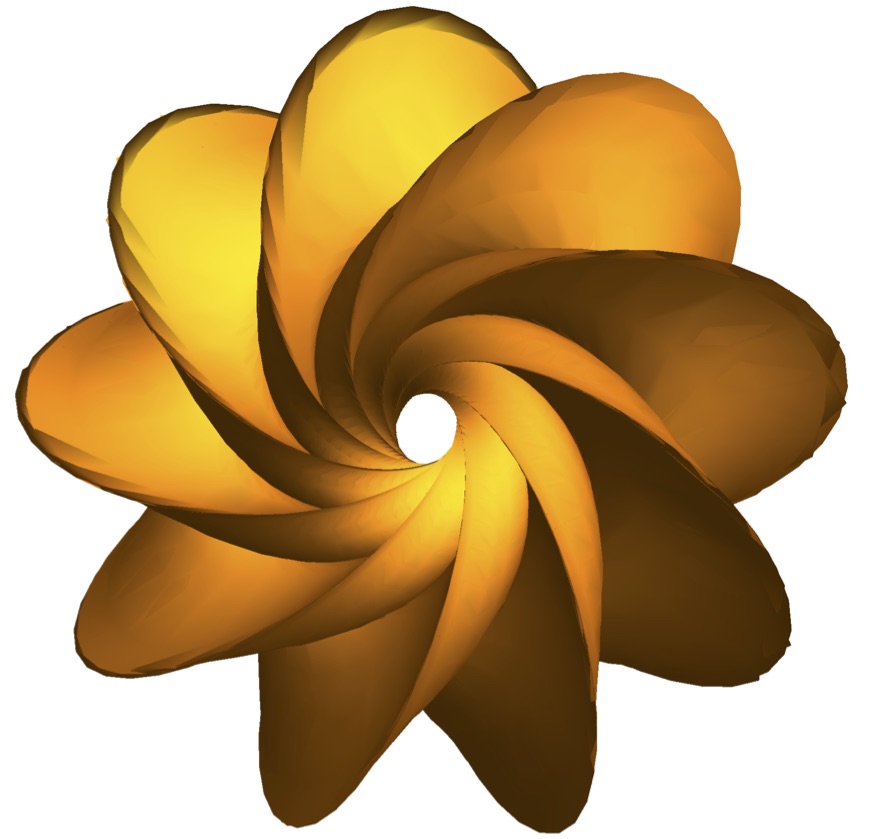}\qquad \quad\qquad
\includegraphics[width=0.22\linewidth]{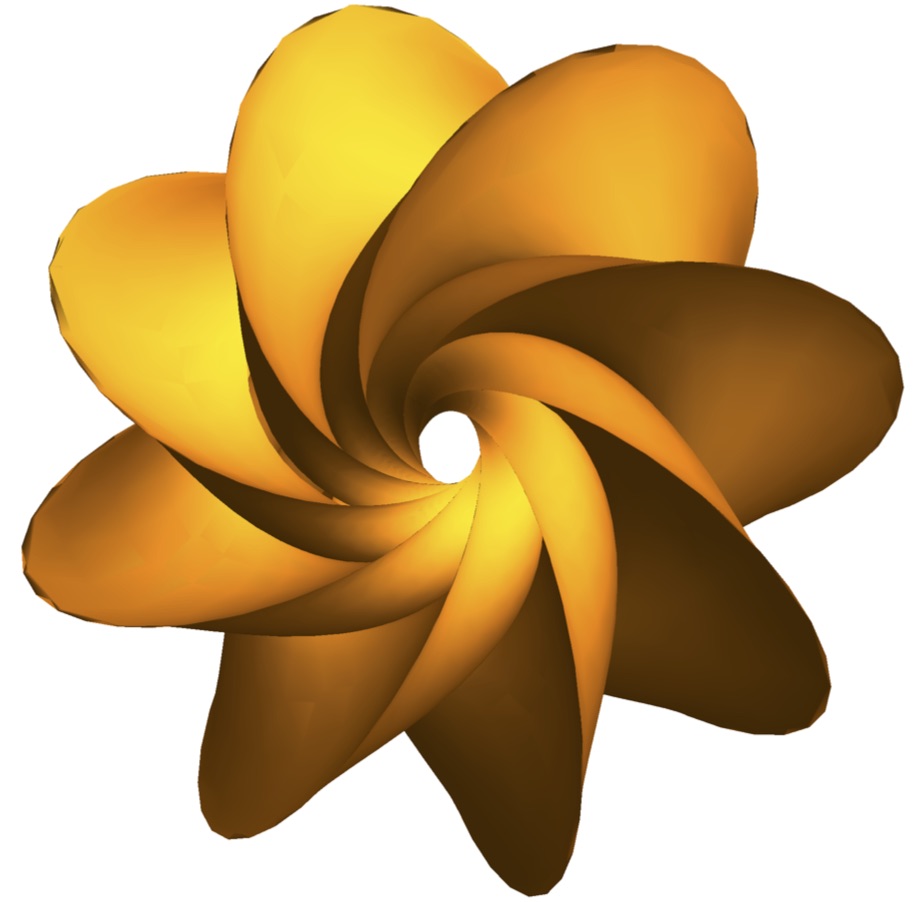}
\end{center}
\caption{\it Stereographic projection in $\r^3$ of the helix spacelike and timelike surface (respectively) of constant angle function $\nu=4$, obtained for $\varepsilon =2$.}	
\label{scelta1}
\end{figure}

However, in the Figure~\ref{scelta2}
we can visualize the stereographic projection in $\r^3$ of surfaces, with constant angle function $\nu=2$, parametrized by $F(s,v)=Q(v)\,\beta(s)$, with $s\in (-2\pi,2\pi)$ and $v\in (-2,2)$, that are obtained choosing $\xi_2(v)=e^v$.
\begin{figure}[!h]
\begin{center}
\includegraphics[width=0.22\linewidth]{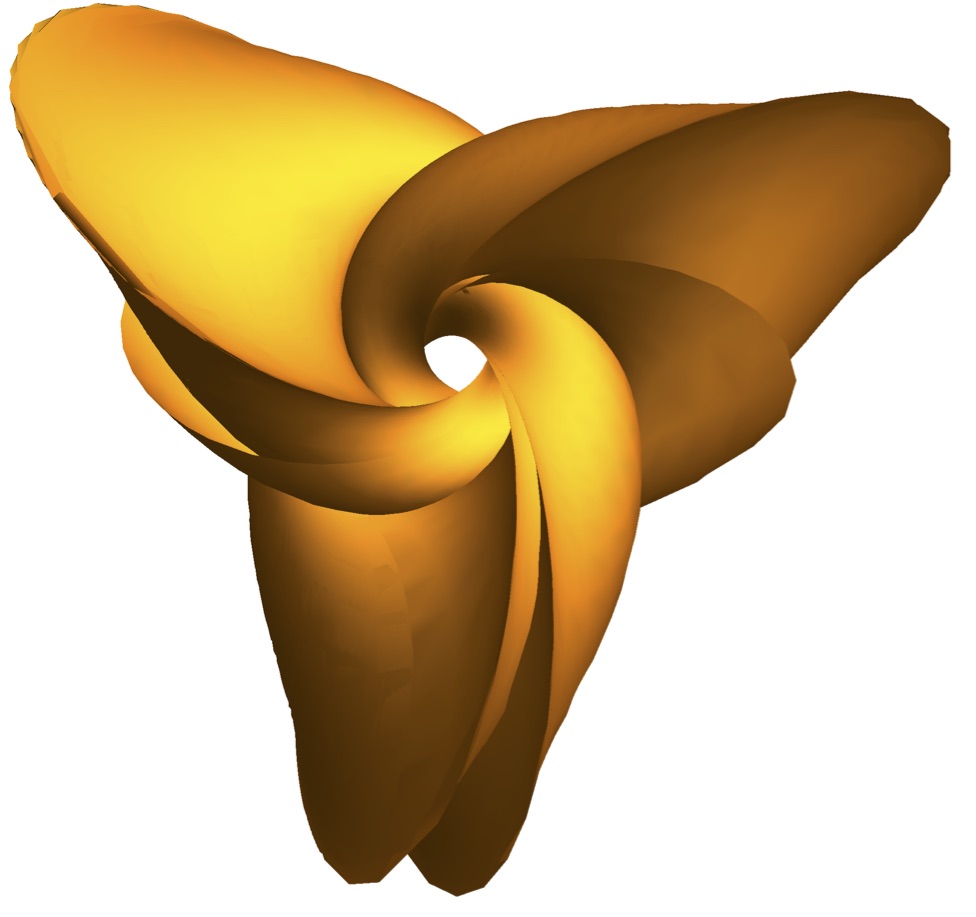}\qquad \qquad\qquad\quad
\includegraphics[width=0.09\linewidth]{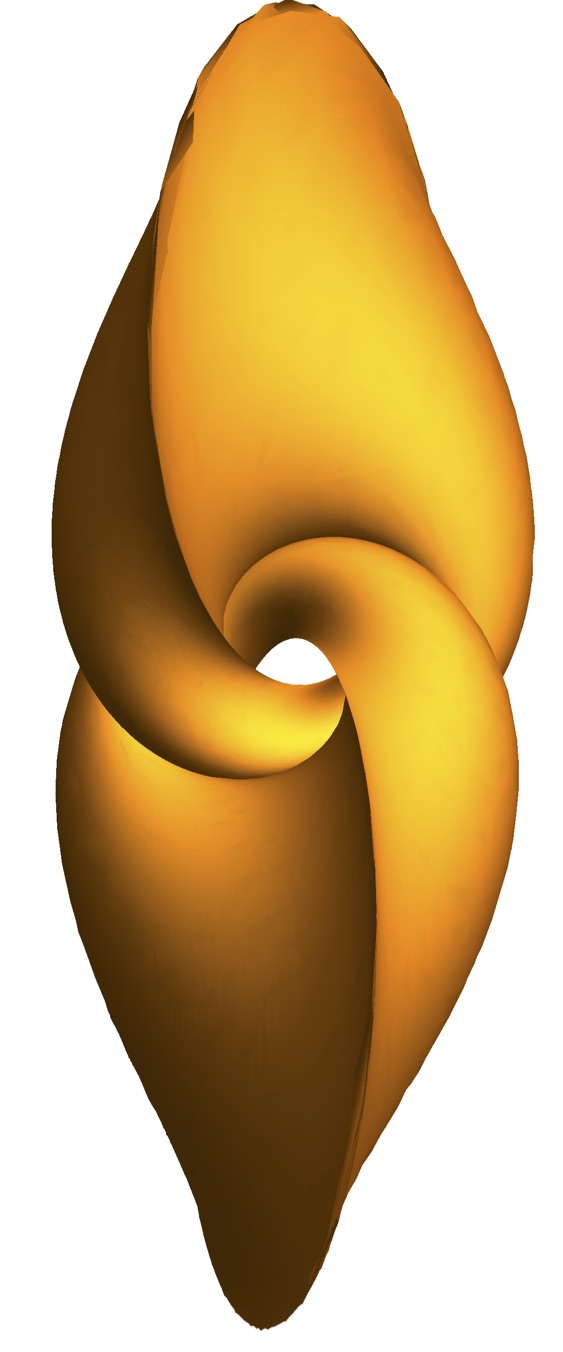}
\end{center}
\caption{\it Stereographic projection in $\r^3$ of the helix spacelike and timelike surface (respectively) of constant angle function $\nu=2$, obtained for $\varepsilon =1$.}	
\label{scelta2}
\end{figure}
\end{example}

\begin{example}
We consider a constant angle surface $F(u,v)=Q(v)\beta(u)$, with local coordinates $(u,v)$ defined as in \eqref{eq:local-coordinates} and \eqref{solab}. In the proof of Theorem~\ref{teo-principal} we see that, denoting by ${\mathbf q_1},{\mathbf q_2},{\mathbf q_3},{\mathbf q_4}$ the four colons of the matrix $Q(v)=Q(\xi,\xi_1,\xi_2,\xi_3)(v)$, it results that (see \eqref{sistem-c23-c24}):
$$
\langle {\mathbf q_2}',{\mathbf q_3}'\rangle=0,\qquad
\langle {\mathbf q_2}',{\mathbf q_4}'\rangle=0
$$
and, therefore, $\xi=\cst$ and 
\begin{equation}\label{xi-uno}
\cos^2(\xi_1(v)) \,\xi_{2}'(v)-\sin^2(\xi_1(v))\, \xi_{3}'(v)=0\,.
\end{equation}
Also, it's easy to check that
$$\langle F_v,F_v\rangle=\langle {\mathbf q_i}',{\mathbf q_i}'\rangle,\qquad i=1,\dots,4.$$ Thus, we get
\begin{equation}\label{xi-due}
\lambda+\nu^2=\xi_1'(v)^2+\cos^2(\xi_1(v))\,\xi_2'(v)^2+\sin^2(\xi_1(v))\,\xi_3'(v)^2.
\end{equation}
If we suppose $\xi_1(v)=\cst$ with $\cos(\xi_1)\neq 0$ and $\sin(\xi_1)\neq 0$, from \eqref{xi-uno} and \eqref{xi-due} we get
$$\xi_2(v)=\tan\xi_1\,\sqrt{\lambda+\nu^2}\,v+d_2,\qquad \xi_3(v)=\cot\xi_1\,\sqrt{\lambda+\nu^2}\,v+d_3,$$
with $d_i\in\r$, $i=2,3$. In particular, choosing $d_2=0=d_3$ and $\xi_1=1/\sqrt{1+d^2}$ (where $d\in (0,1)$ is the constant given in the Corollary~\ref{cor-2}), we obtain $$\xi_2(v)=d^{-1}\,\sqrt{\lambda+\nu^2}\,v,\qquad \xi_3(v)=d\,\sqrt{\lambda+\nu^2}\,v$$ and the corresponding immersion $F(s,v)=Q(v) \beta(s)$ of a helix surface into the Lorentzian Berger sphere depends only of $\nu$ and $\lambda$.
By using this parametrization composed with the stereographic
projection in $\r^3$,  we can visualize two examples of helix surfaces in the Lorentzian Berger sphere $\s_1^3$ given in the Figures~\ref{scelta3} and \ref{scelta3-bis}.\\
\begin{figure}[!h]
\begin{center}
\includegraphics[width=0.24\linewidth]{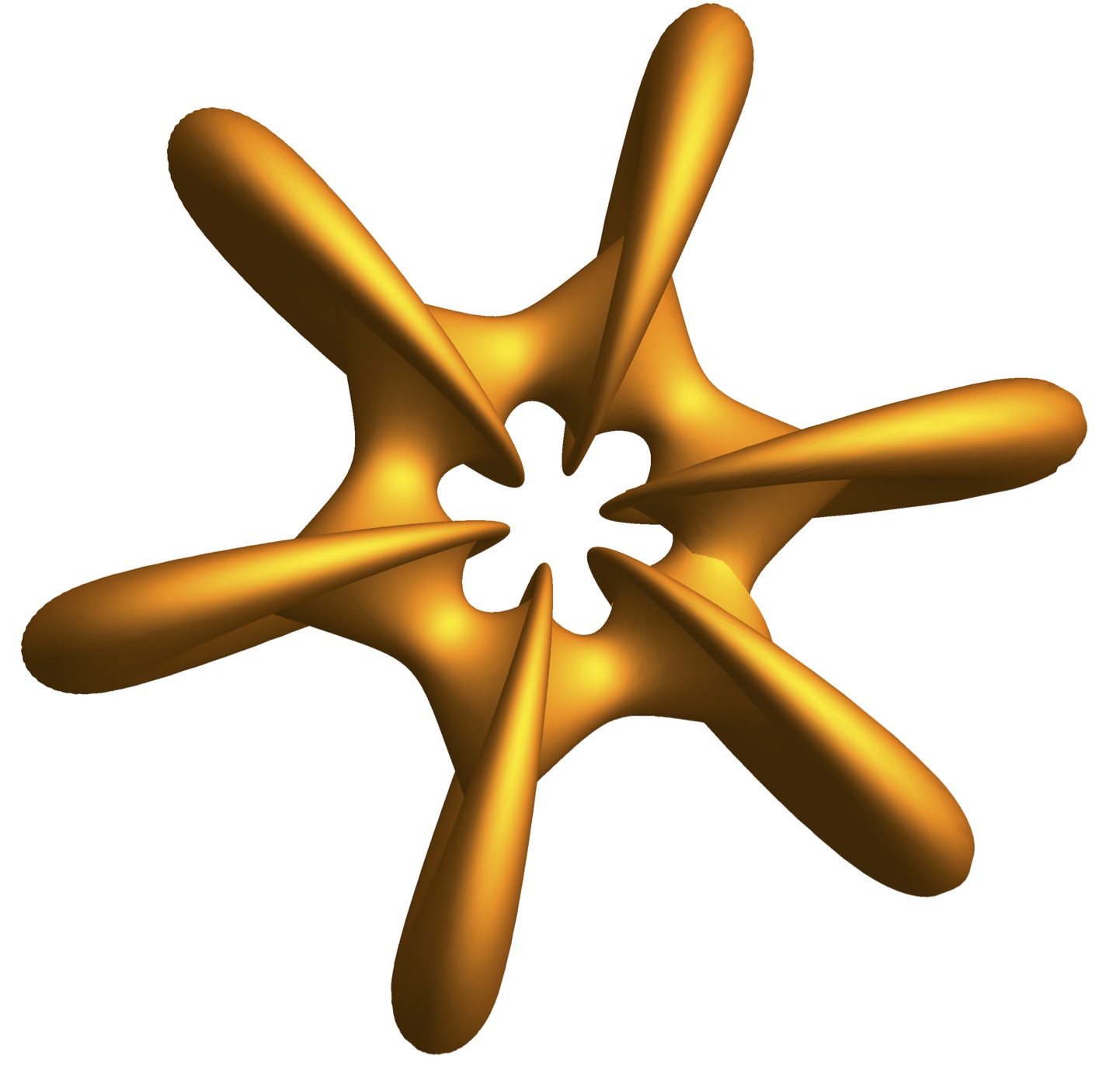}\qquad \qquad\qquad
\includegraphics[width=0.26\linewidth]{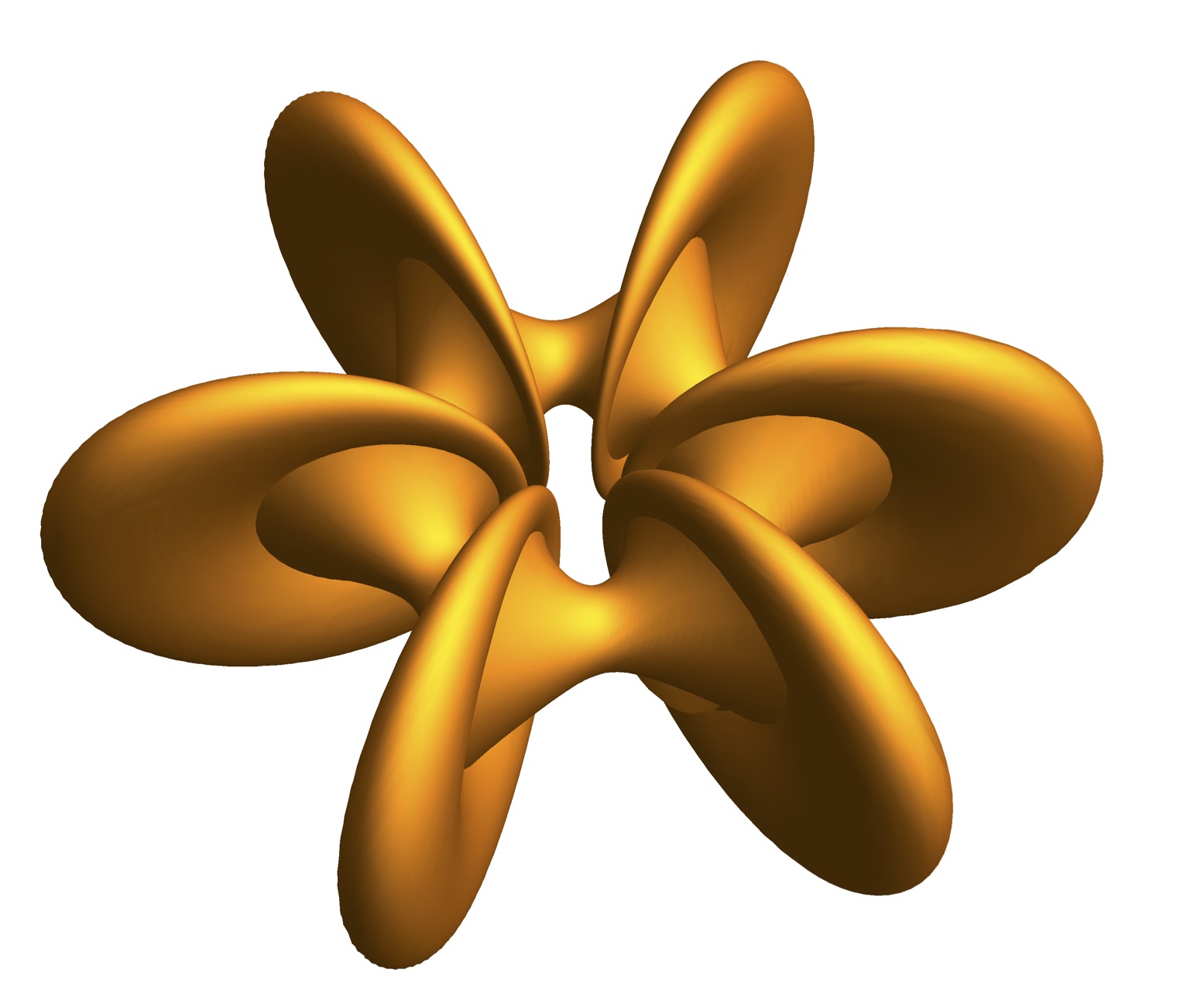}
\end{center}
\caption{\it Stereographic projection in $\r^3$ of the helix spacelike  surface of constant angle function $\nu=\sqrt{5}$, obtained for $\varepsilon =1$.}	
\label{scelta3}
\end{figure}

\begin{figure}[!h]
\begin{center}
\includegraphics[width=0.21\linewidth]{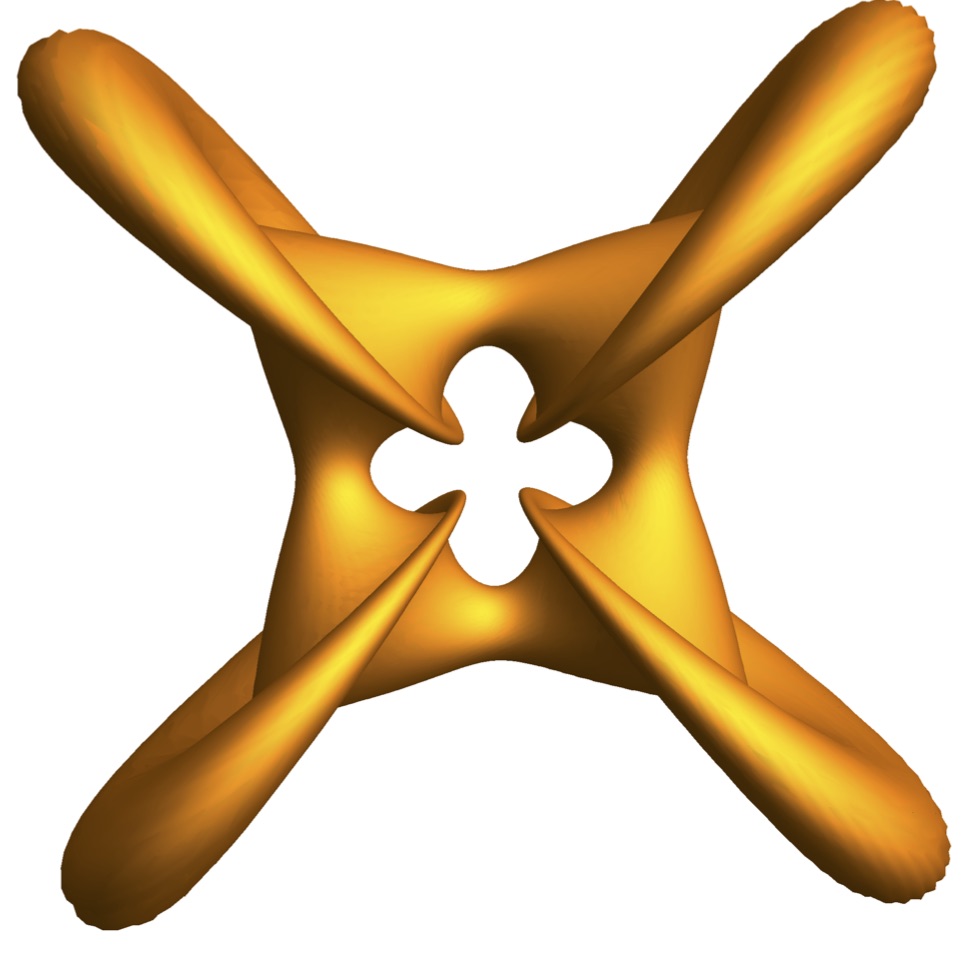}\qquad \qquad\qquad
\includegraphics[width=0.24\linewidth]{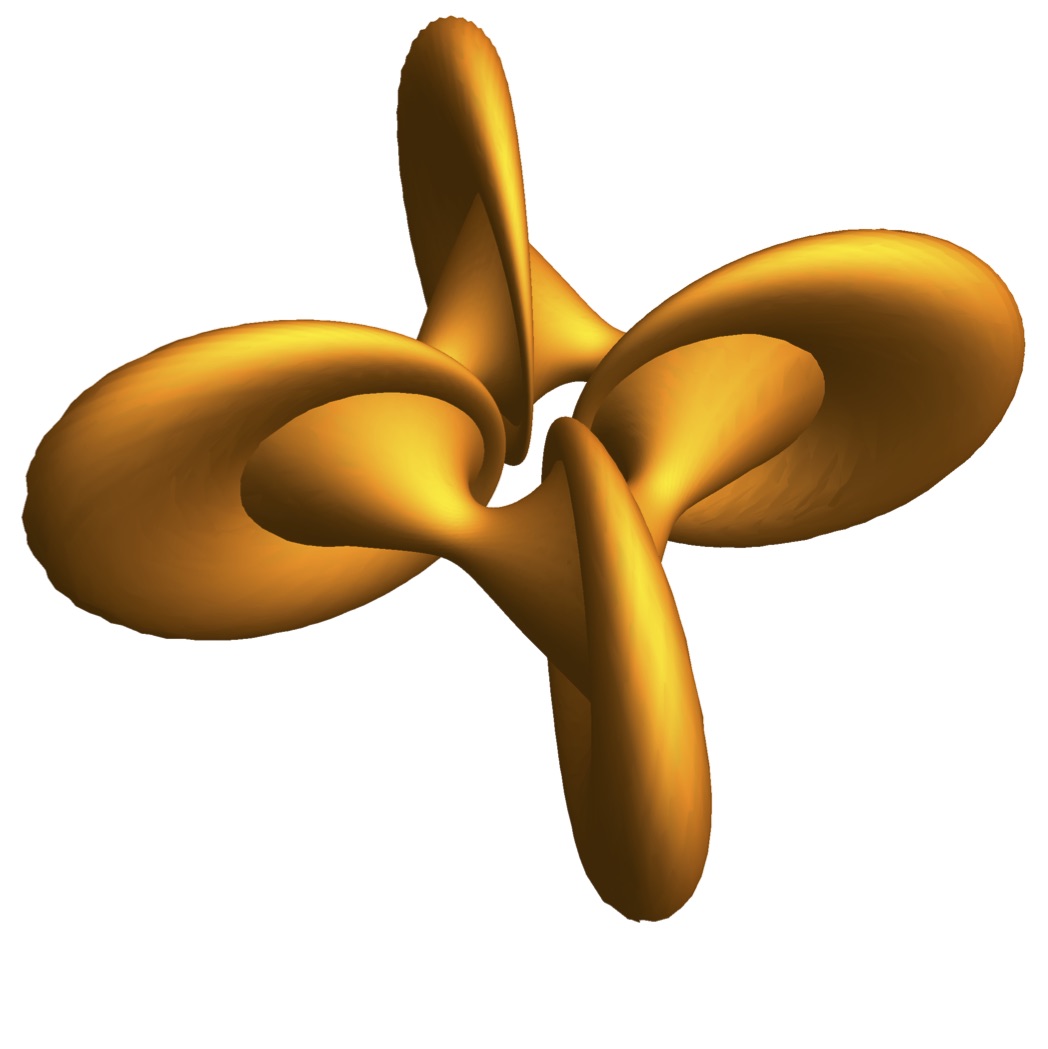}
\end{center}
\caption{\it Stereographic projection in $\r^3$ of the helix timelike surface of constant angle function $\nu=2$ in the Lorentzian Berger sphere $\s_1^3$.}	
\label{scelta3-bis}
\end{figure}
\end{example}

\end{document}